\setlist[itemize]{topsep=0ex,itemsep=0ex,parsep=0.4ex}
\setlist[enumerate]{topsep=0ex,itemsep=0ex,parsep=0.4ex}
\newcommand{\PreserveBackslash}[1]{\let\temp=\\#1\let\\=\temp}
\newcolumntype{C}[1]{>{\PreserveBackslash\centering}p{#1}}
\newcommand*{\braceme}[6][]{
\draw[
    shift={(#3:#2)},
    right to reversed-right to reversed,
    shorten >=-.75\pgflinewidth,
    #1
    ] (0,0)
        arc[radius=#2, start angle=#3, end angle=#3+(#4-#3)/2] node[
        above=2pt] (#5) {#6};
\draw[
    shift={({#3+(#4-#3)/2}:#2)},
    left to reversed-left to reversed,
    shorten <=-.75\pgflinewidth,
    #1
    ] (0,0)
        arc[radius=#2, start angle=#3+(#4-#3)/2, end angle=#4];
}
\newtheorem{theorem}{Theorem}
\newtheorem{lemma}{Lemma}[section]
\newtheorem{claim}[lemma]{Claim}
\newtheorem*{claim*}{Claim}
\newtheorem{conjecture}{Conjecture}
\theoremstyle{definition}
\newtheorem{remark}[lemma]{Remark}
\def\namedlabel#1#2{\begingroup
   \def\@currentlabel{#2}%
   \label{#1}\endgroup
}
\newenvironment{poc}{\begin{proof}[Proof of
    Claim]}{\end{proof}}
\crefname{subsection}{Subsection}{Subsections}
\renewcommand{\ge}{\geqslant}
\renewcommand{\le}{\leqslant}
\renewcommand{\geq}{\geqslant}
\renewcommand{\leq}{\leqslant}
\newcommand*{\eps}{\varepsilon}
\newcommand*{\cB}{\mathcal{B}}
\newcommand*{\cA}{\mathcal{A}}
\newcommand{\colora}{Goldenrod}
\newcommand{\colorb}{SkyBlue}
\newcommand{\colorc}{Sepia}
\newcommand{\colord}{orange}
\newcommand{\colore}{MidnightBlue}
\newcommand{\colorf}{white}
\newcommand{\coloredbullet}[1]{\raisebox{-0.3ex}{\textcolor{#1}{\text{\Large \textbullet}}}}
\newcommand{\colb}{\coloredbullet{\colorb}}
\newcommand{\colc}{\coloredbullet{\colorc}}
\def\ray{5pt}
\def\dash{10pt}
\tikzset{
    dashone/.style={dash pattern=on 5pt off 15pt},
  }
\newcommand{\perform}[2]{\ensuremath{%
    {#1}^{\rightarrow}\left( #2 \right)
  }}
\title{A Recolouring Version of a Conjecture of Reed}
\date{\today}
\author{
Lucas De Meyer\footnotemark[1] \and 
Cl\'ement Legrand-Duchesne\footnotemark[2]\footnotemark[0] \and
Jared Le\'on\footnotemark[3] \and
Tim Planken\footnotemark[4] \and
Youri Tamitegama\footnotemark[5]
}
\begin{document}
\maketitle

\renewcommand{\thefootnote}{\fnsymbol{footnote}} 

\footnotetext[1]{Université Claude Bernard Lyon 1, CNRS, INSA Lyon, LIRIS, UMR5205, Villeurbanne France
(\textsf{\href{mailto:lucas.de-meyer@univ-lyon1.fr}{lucas.de-meyer@univ-lyon1.fr}}).}
\footnotetext[2]{Theoretical Computer Science Department, Faculty of Mathematics and Computer Science, Jagiellonian University, Kraków, Poland
(\textsf{\href{mailto:clement.legrand-duchesne@uj.edu.pl}{clement.legrand-duchesne@uj.edu.pl}}).}
\footnotetext[0]{CNRS, LaBRI, Université de Bordeaux, Bordeaux, France.}
\footnotetext[3]{Mathematics Institute, University of Warwick, United Kingdom
(\textsf{\href{mailto:jared.leon@warwick.ac.uk}{jared.leon@warwick.ac.uk}}).}
\footnotetext[4]{Institute of Discrete Mathematics and Algebra, Faculty of Mathematics and Computer Science, Technische Universität Bergakademie Freiberg, Germany
(\textsf{\href{mailto:tim.planken@math.tu-freiberg.de}{tim.planken@math.tu-freiberg.de}}).}
\footnotetext[5]{Mathematical Institute, University of Oxford, United Kingdom
(\textsf{\href{mailto:tamitegama@maths.ox.ac.uk}{tamitegama@maths.\allowbreak ox.ac.uk}}).}

\renewcommand{\thefootnote}{\arabic{footnote}} 
\begin{abstract}
  Reed conjectured that the chromatic number of any graph is closer to its
  clique number than to its maximum degree plus one. We consider a recolouring
  version of this conjecture, with respect to Kempe changes. Namely, we
  investigate the largest $\eps$ such that all graphs $G$ are $k$-recolourable
  for all $k \ge \lceil \eps \omega(G) + (1 -\eps)(\Delta(G)+1) \rceil$.\\
  For general graphs, an existing construction of a frozen colouring shows that
  $\eps \le 1/3$. We show that this construction is optimal in the sense that
  there are no frozen colourings below that threshold. For this reason, we
  conjecture that $\eps = 1/3$. For triangle-free graphs, we give a
  construction of frozen colourings that shows that $\eps \le 4/9$, and prove
  that it is also optimal.\\
  In the special case of odd-hole-free graphs, we show that $\eps = 1/2$, and
  that this is tight up to one colour.

\end{abstract}

\section{Introduction}

The chromatic number $\chi(G)$ of any graph $G$ lies between its clique number
$\omega(G)$ and the maximum degree $\Delta(G)$ plus one. A natural question is which of these two bounds
is closer to the chromatic number. Reed~\cite{MR1610746} conjectured that the
chromatic number of any graph~$G$ is at
most~$\left\lceil \left(\omega(G) + \Delta(G) +1\right) / 2\right\rceil$.

In the same article, Reed proved using the probabilistic method that this bound
is tight if true (see Theorem 2 in~\cite{MR1610746}). He also proved that there exists $\eps > 0$ such that for all
$G$, $\chi(G) \le \lceil \eps\omega(G) + (1-\eps)(\Delta(G) +1) \rceil$. More
recently, King and Reed~\cite{MR3431290} gave a significantly shorter proof of
this result and proved that for large enough $\Delta$, Reed's conjecture holds
for $\eps \le \frac{1}{320e^6}$. This conjecture generated a lot of interest over the past years and similar statements were proved in~\cite{MR1610746,zbMATH06827195} for increasing values of $\eps$.
The best bound known today is due to Hurley, de Joannis
de Verclos and Kang~\cite{MR4262443}, who proved that for all graphs $G$ with
sufficiently large maximum
degree,~$\chi(G) \le \lceil 0.119\omega(G) + 0.881(\Delta(G) + 1) \rceil$.

Reed's conjecture has been proven for several hereditary graph classes~\cite{king2009Clawfree,aravind2011Bounding,2016arXiv161102063W}.
Finally, list colouring versions and local strengthenings of Reed's conjecture were also considered in~\cite{zbMATH06827195,kelly2020Local} and~\cite{king2009Clawfree,aravind2011Bounding,kelly2020Local} respectively. More precisely, let $f(G) = \max_{v \in V(G)} \lceil (\omega(v) + \deg(v) +1)/2\rceil$, where $\omega(v)$ is the size of the largest clique containing $v$, the following local strengthening of Reed conjecture was introduced by King in~\cite{king2009Clawfree}:
\begin{conjecture}[Local Reed Conjecture]
    All graph $G$ are $f(G)$-colourable.
\end{conjecture}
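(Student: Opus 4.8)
The plan is to run the probabilistic argument that King and Reed~\cite{MR3431290} used for the global statement, but with every quantity localised. First I would pass to a minimal counterexample $G$, meaning $\chi(G)>f(G)$ while $\chi(H)\le f(H)\le f(G)$ for every proper induced subgraph $H$ (note that $f$ is monotone under taking induced subgraphs, since deleting a vertex can only decrease degrees and local clique numbers). Then $G$ is $(f(G)+1)$-vertex-critical, so $\deg(v)\ge f(G)\ge f(v)=\lceil(\omega(v)+\deg(v)+1)/2\rceil$ for every $v$; this rearranges to $\deg(v)\ge\omega(v)+1$, so no vertex lies in a clique filling almost all of its closed neighbourhood, and there is a uniform ``$+1$ of slack'' to spend everywhere.

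Next I would split the vertices into \emph{locally sparse} and \emph{locally dense} according to the number of edges inside $N(v)$ relative to $\binom{\deg(v)}{2}$, mirroring the sparse/dense dichotomy of Molloy and Reed, and extract from each dense neighbourhood a family of almost-cliques whose sizes are measured against $\omega(v)$ rather than $\omega(G)$. The colouring would then be produced by one round of a random partial proper colouring---each vertex keeps a uniformly random colour unless a neighbour chose the same one---followed by a greedy completion on the uncoloured vertices. At a locally sparse $v$ the saving comes from the positive probability that two neighbours of $v$ collide and thereby waste a colour inside $N(v)$; at a locally dense $v$ it comes from the clique structure, because a near-optimal colouring of an almost-clique forces many repetitions among the colours $v$ sees. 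The goal is to show, via the Lov\'asz Local Lemma, that with positive probability every vertex simultaneously retains enough free colours to be handled in the completion step, i.e.\ that $f(v)$ colours suffice at every vertex at once.

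The step I expect to be the main obstacle is precisely the localisation of the Local Lemma. In the global proof all bad events have probability and dependency degree controlled by the single parameter $\Delta$, so a single ``$\mathrm{e}pd\le1$''-type inequality closes everything; here the degrees $\deg(v)$ range freely, a bad event at a low-degree vertex can depend on events at vertices of vastly larger degree, and the required saving---roughly $\tfrac12(\deg(v)-\omega(v))$---can be as small as a constant. Making this work presumably needs an asymmetric or weighted Local Lemma together with a truncation that, for each $v$, exposes only the part of the random colouring in a bounded-radius ball around $v$ that can actually obstruct it, in the spirit of the local $\eps$-version of Kelly and Postle~\cite{kelly2020Local}. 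A second difficulty is the regime where $\deg(v)-\omega(v)$ is small and the random-colouring slack is negligible: there one instead wants a deterministic finishing move (colour the almost-clique near-optimally first, then extend), and reconciling this with the randomised treatment of the sparse vertices---ideally by committing to the dense parts before sampling---is where I would expect most of the technical effort to go.
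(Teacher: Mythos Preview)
The statement you are trying to prove is the \emph{Local Reed Conjecture}, which the paper records precisely as a conjecture and does not prove. There is therefore no ``paper's own proof'' to compare your attempt against: the paper merely states the conjecture as background (attributed to King~\cite{king2009Clawfree}) and then moves on to its actual subject, the recolouring analogues. Your proposal is an attempt to resolve an open problem, not to reproduce an argument from the paper.

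On the substance of your sketch: the overall architecture you describe---minimal counterexample, sparse/dense decomposition, random partial colouring plus Local Lemma---is exactly the machinery behind the known partial results you cite, in particular the local $\eps$-version of Kelly and Postle~\cite{kelly2020Local}. But those results prove $\chi(G)\le\max_v\lceil\eps\,\omega(v)+(1-\eps)(\deg(v)+1)\rceil$ for some small positive $\eps$, not for $\eps=\tfrac12$. The obstacles you yourself flag are the genuine reasons no one has pushed this to $\eps=\tfrac12$: the asymmetric Local Lemma with wildly varying degrees and the regime where $\deg(v)-\omega(v)$ is bounded (so the probabilistic saving is $O(1)$ while one needs exactly $\lceil(\deg(v)-\omega(v)+1)/2\rceil$) are not technicalities to be tidied up but the heart of why the conjecture is open. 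Even the \emph{global} Reed conjecture ($\eps=\tfrac12$ with $\omega(G)$ and $\Delta(G)$) is open, and the local version is at least as hard. So your plan is a reasonable outline of how one proves weaker statements in this family, but it is not a proof of the conjecture, and the paper makes no claim that one exists.
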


\subsection{A recolouring version of Reed's conjecture}
In this article, we consider a recolouring variation of Reed's conjecture, that was
introduced by Bonamy, Kaiser and Legrand-Duchesne (see Section~2.3.1.2 in~\cite{legrand-duchesne2024Exploring}). Given a graph and a proper colouring of its vertex set, a \emph{Kempe chain} is
a connected bichromatic component of the graph. A \emph{Kempe change} consists in swapping the
two colours within a Kempe chain (see~\cref{fig:kempe}), thereby resulting in another proper colouring.
This reconfiguration operation was introduced in 1879 by Kempe in an attempt to
prove the Four-Colour theorem. Kempe changes are decisive in the existing proofs
of the Four-Colour theorem and of Vizing's edge-colouring theorem. We say that a
graph is \emph{$k$-recolourable} if all its $k$-colourings are \emph{Kempe equivalent}, that is,
if any $k$-colouring can be obtained from any other through a series of Kempe changes.
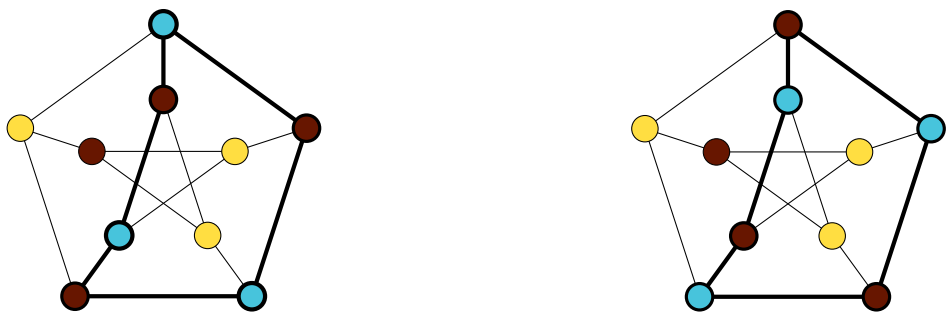
\begin{figure}[h!]
  \centering
  \hfill
  \begin{subfigure}[b]{0.45\textwidth}
    \centering
    \begin{tikzpicture}
      \foreach \x in {1,...,5}{
        \coordinate (a\x) at (90+\x*72:1cm);
        \coordinate (b\x) at (90+\x*72:2cm);
      }
      \foreach \x in {1,...,5}{
        \draw (a\x) -- (b\x);
        \pgfmathsetmacro{\y}{int(Mod(\x,5)+1)}
        \pgfmathsetmacro{\z}{int(Mod(\x+1,5)+1)}
        \draw (b\x) -- (b\y);
        \draw (a\x) -- (a\z);
      }

      \draw[ultra thick] (b5) -- (b4) -- (b3) -- (b2) -- (a2) -- (a5) -- (b5);

      \foreach \x in {b1,a3,a4}{
        \node[circle,draw=black,inner sep = 1pt, minimum size = 2*\ray,fill=\colora]  at (\x) {};
      }
      \node[circle,draw=black,inner sep = 1pt, minimum size = 2*\ray,fill=\colorc]  at (a1) {};
      \foreach \x in {b2,b4,a5}{
        \node[circle,draw=black,very thick,inner sep = 1pt, minimum size = 2*\ray,fill=\colorc]  at (\x) {};
      }
      \foreach \x in {b3,b5,a2}{
        \node[circle,draw=black,ultra thick,inner sep = 1pt, minimum size = 2*\ray,fill=\colorb]  at (\x) {};
      }

    \end{tikzpicture}
  \end{subfigure}
  \hfill
  \begin{subfigure}[b]{0.45\textwidth}
    \centering
    \begin{tikzpicture}
      \foreach \x in {1,...,5}{
        \coordinate (a\x) at (90+\x*72:1cm);
        \coordinate (b\x) at (90+\x*72:2cm);
      }
      \foreach \x in {1,...,5}{
        \draw (a\x) -- (b\x);
        \pgfmathsetmacro{\y}{int(Mod(\x,5)+1)}
        \pgfmathsetmacro{\z}{int(Mod(\x+1,5)+1)}
        \draw (b\x) -- (b\y);
        \draw (a\x) -- (a\z);
      }

      \draw[ultra thick] (b5) -- (b4) -- (b3) -- (b2) -- (a2) -- (a5) -- (b5);

      \foreach \x in {b1,a3,a4}{
        \node[circle,draw=black,inner sep = 1pt, minimum size = 2*\ray,fill=\colora]  at (\x) {};
      }
      \node[circle,draw=black,inner sep = 1pt, minimum size = 2*\ray,fill=\colorc]  at (a1) {};
      \foreach \x in {b2,b4,a5}{
        \node[circle,draw=black,very thick, inner sep = 1pt, minimum size = 2*\ray,fill=\colorb]  at (\x) {};
      }
      \foreach \x in {b3,b5,a2}{
        \node[circle,draw=black,very thick, inner sep =1pt, minimum size=  2*\ray,fill=\colorc]  at (\x) {};
      }
    \end{tikzpicture}
  \end{subfigure}
  \caption{Two 3-colourings of the Petersen graph that differ by one Kempe
    change. The corresponding $\{\colb,\colc\}$-Kempe chain is thickened.}
  \label{fig:kempe}
\end{figure}

The most common obstruction for $k$-recolourability is the existence of a
\emph{frozen $k$-colouring}, a $k$-colouring in which any two colours span a
connected subgraph. As a result, the partition induced by the colour classes is
invariant under Kempe changes, and if this colouring is not unique (up to colour
permutation), then the
graph is not $k$-recolourable.  Besides frozen colourings, the only other known
obstruction to recolourability is a topological argument of Mohar and
Salas~\cite{mohar2009New,mohar2010Nonergodicity} that is specific to
3-colourings of highly regular planar or toroidal graphs.

Bonamy, Kaiser and Legrand-Duchesne asked for the largest~$\eps$ such that all graphs $G$
are $k$-recolourable for all
$k \ge \lceil \eps\omega(G) + (1-\eps)(\Delta(G) +1) \rceil$. It remains open
whether this holds for some positive $\eps$. On the other hand, the parameter
$\eps$ is known to be at most $1/3$. Indeed, Bonamy, Heinrich, Legrand-Duchesne,
and Narboni~\cite{MR4660623} constructed a random graph with degrees
concentrating around $3n/4$ and expected clique number $\Theta(\log(n))$,
that admits a frozen $n/2$-colouring and is not $n/2$-recolourable with high
probability. In particular, this hints that Kempe changes are unlikely to be of any use to prove
Reed's conjecture in the range ${\eps \in [1/3,1/2]}$.

In \cref{sec:tight}, we prove that this construction is optimal in general:
\begin{restatable}{theorem}{tight}\label{thm:no_frozen}
  Let $G$ be a connected graph.
  For any $\eta \le 1/3$, if $G$ admits a frozen $k$-colouring
  for
  $k \le \lceil\eta\omega(G) + (1-\eta)(\Delta(G) +1)\rceil$, then it is unique up to permuting colours.
\end{restatable}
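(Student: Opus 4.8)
The plan is to reduce to the extremal value $\eta=1/3$ and then derive a contradiction from the existence of two distinct frozen colourings by a counting argument anchored at a maximum clique. Since $\omega(G)\le\Delta(G)+1$, the quantity $\eta\,\omega(G)+(1-\eta)(\Delta(G)+1)$ is non-increasing in $\eta$, so the strongest instance among all $\eta\le 1/3$ is $\eta=1/3$; it therefore suffices to prove the statement assuming $k\le\lceil\tfrac13\omega(G)+\tfrac23(\Delta(G)+1)\rceil$, i.e.\ $k-\omega(G)\le 2\bigl(\Delta(G)+1-k\bigr)$. I would also isolate two standard features of any frozen $k$-colouring that the argument leans on: every vertex has a neighbour in every colour class other than its own (so $\delta(G)\ge k-1$), and deleting a colour class $C_i$ leaves a \emph{connected} graph on which the remaining $k-1$ classes still form a frozen $(k-1)$-colouring (connectedness because any two surviving classes are already joined by a connected bichromatic subgraph of $G-C_i$).

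Next I would fix a maximum clique $K$, with colours $1,\dots,\omega$ say, and record its ``external skeleton''. Each $v\in K$ must realise the $k-\omega$ colours $\omega+1,\dots,k$ among its neighbours, and it can only do so outside $K$, giving at least $k-\omega$ \emph{mandatory} edges from $v$ to $V\setminus K$; conversely each $w\in V\setminus K$ has a non-neighbour in $K$ (else $K\cup\{w\}$ is a larger clique), so $w$ is adjacent to at most $\omega-1$ vertices of $K$. Comparing these facts with the degree cap $\Delta$ shows that each $v\in K$ has only $\Delta+1-k$ ``free'' edges to $V\setminus K$ beyond the mandatory ones, and the reduced hypothesis says precisely that this free budget is at least half the mandatory load.

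For uniqueness, let $c'$ be a second frozen $k$-colouring and assume for contradiction that its colour partition differs from that of $c$, so the common refinement $\cP_c\wedge\cP_{c'}$ strictly refines $\cP_c$. Using the deletion property I would peel off, one at a time, colour classes on which $c$ and $c'$ agree — choosing at each step a shared class whose removal changes $\omega$ and $\Delta$ little enough that $k-\omega\le 2(\Delta+1-k)$ is preserved — until reaching a connected induced subgraph $G^{\star}$ on which the two frozen colourings share no colour class. On $G^{\star}$ every colour class of $c$ is genuinely split by $c'$ and vice versa; I would then argue, using the connectivity of \emph{every} bichromatic subgraph of both $c$ and $c'$, that some vertex $u$ sitting inside such a split is forced to carry mandatory external neighbours with respect to a maximum clique \emph{simultaneously} for $c$ and for $c'$. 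These two families of mandatory edges lie on disjoint colour sets but compete for the single degree budget of $u$, and that double demand overruns the skeleton bound unless $k-\omega>2(\Delta+1-k)$, contradicting the inherited hypothesis.

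The main obstacle is this last step. A disagreement between two frozen colourings need not be visible at any single vertex, and a maximum clique for $c$ need not be one for $c'$; so the discrepancy has to be propagated along bichromatic subgraphs — which are only known to be connected, not dense — down to one vertex (or a short path) where the ``double clique demand'' materialises, and this must be done without losing the sharp factor $2$ that produces the constant $1/3$. Making that propagation lossless, together with checking that peeling shared classes preserves the hypothesis, is where I expect essentially all of the work to lie; the reductions in the earlier steps are routine by comparison.
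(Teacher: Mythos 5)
Your plan stops short of a proof at exactly the point where a proof would be needed: the ``double clique demand'' step that is supposed to convert a disagreement between two frozen colourings into a degree overload at a single vertex is left entirely open, and you acknowledge that essentially all of the work lies there. Moreover, the setup has concrete defects that make the plan unlikely to close. First, you take the second colouring $c'$ to be frozen, but the theorem (and its use in bounding $\eta^\star$) concerns the frozen colouring being the unique $k$-colouring among \emph{all} $k$-colourings; the colouring witnessing non-uniqueness need not be frozen, as in the example $K_3\times K_k$ of \ref{thm:main_ii}, where the second colouring uses only three colours. Second, the working inequality you reduce to, $k-\omega\le 2\bigl(\Delta+1-k\bigr)$, i.e.\ $k\le\frac13\omega+\frac23(\Delta+1)$, is precisely the regime in which non-unique frozen colourings \emph{do} exist ($K_3\times K_k$ satisfies it, as does the random construction of Bonamy, Heinrich, Legrand-Duchesne and Narboni), so no contradiction can follow from that hypothesis alone; the actual content of the theorem is the reverse statement, that a frozen non-unique $k$-colouring forces $k<\frac13\omega+\frac23(\Delta+1)$, equivalently a large maximum degree compared to $k$. (Relatedly, your monotonicity reduction points the wrong way: for $\eta<1/3$ the ceiling is larger, so the constraint $k\le\lceil\cdot\rceil$ is more permissive, not less.) Third, the peeling of shared colour classes is not obviously hypothesis-preserving ($k$ drops by one, $\omega$ by at most one, but $\Delta$ may drop by much more), and anchoring the count at a maximum clique cannot yield the sharp constant, since in the extremal examples $\omega$ is $2$ or $\Theta(\log k)$ and the clique-local bookkeeping only recovers $\deg(v)\ge k-1$.

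For contrast, the paper's proof never compares two colourings and uses no clique anchor. It first deletes the set $X$ of singleton colour classes, which forms a dominating clique, so that $k$ and $\omega$ drop by exactly $|X|$ while $\Delta$ drops by at least $|X|$, and the relevant ratio only improves; this reduces to a frozen colouring all of whose classes have size at least two. A single global counting argument then finishes: by \cref{lem:avgdeg_frozen}, since every pair of colour classes spans a connected subgraph, a smallest class $U$ has average degree at least $\bigl(n-|U|+(k-1)(|U|-1)\bigr)/|U|\ge\frac32(k-1)$, hence $\Delta\ge\frac32(k-1)$ and the $\eta$-value of $(G,k)$ exceeds $1/3$. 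The obstruction is thus a global average-degree constraint coming from the connectivity of all bichromatic pairs, not a local budget at a maximum clique; your proposal would need a genuinely different idea to replace that, and as written it does not contain one.
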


Since the random construction of Bonamy, Heinrich, Legrand-Duchesne and Narboni admits large cliques of size $\Theta(\log k)$, it is natural to ask whether similar obstructions occur in graphs of bounded clique number.
We fully resolve this question for triangle-free graphs by giving an optimal construction of a triangle-free graph with a frozen non-unique $k$-colouring and
maximum degree at most $(9k-1)/5$:

\begin{restatable}{theorem}{trianglefree}\label{thm:triangle_free}
\begin{enumerate}[label =(\roman*), ref=\cref{thm:triangle_free}.(\roman*)]
  \item[]
  \item\label{thm:triangle_free_i} For any $\eta > 4/9$ and $k = \lceil 2\eta + (1-\eta)(\Delta +1)\rceil$, there exist triangle-free graphs that are not $k$-recolourable.
  \item\label{thm:triangle_free_ii} For any $\eta \le 4/9$ and $k = \lceil 2\eta + (1-\eta)(\Delta +1)\rceil$, if a triangle-free graph admits a frozen $k$-colouring, then it is unique up to permuting colours.
  \end{enumerate}
\end{restatable}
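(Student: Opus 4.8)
The plan is to handle the two halves separately, after a common reduction. A triangle-free graph with an edge has $\omega=2$, so the bound reads $k=\lceil 2\eta+(1-\eta)(\Delta+1)\rceil$; unwinding the ceiling, a value $\eta\le 4/9$ realising a pair $(k,\Delta)$ exists precisely when $5\Delta\le 9k-13$, and the complementary regime $\eta>4/9$ corresponds (up to rounding) to $5\Delta\ge 9k-12$. I would also record at the outset that, for a connected graph with a frozen $k$-colouring $c$, every Kempe change at $c$ merely transposes two colour classes — the two-colour subgraph $G[A_i\cup A_j]$ is connected, hence a single Kempe chain — so the Kempe-equivalence class of $c$ is exactly the set of colour-permutations of $c$, and therefore $G$ is $k$-recolourable if and only if $c$ is the only $k$-colouring up to permutation. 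This is the dichotomy underlying \cref{thm:triangle_free_i} and \cref{thm:triangle_free_ii}, and it reduces the theorem to showing that a triangle-free graph has two inequivalent $k$-colourings, one of them frozen, exactly when $5\Delta\gtrsim 9k$.

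For \cref{thm:triangle_free_i} I would construct an explicit parametrised family of triangle-free graphs and, for each $\eta>4/9$, pick a member $G$ with maximum degree $\Delta$ and $k=\lceil 2\eta+(1-\eta)(\Delta+1)\rceil$ that carries a frozen $k$-colouring $c$ together with a second $k$-colouring that is not a permutation of $c$; by the reduction this already certifies that $G$ is not $k$-recolourable. The intended design is a cyclic chain of congruent triangle-free gadgets, each responsible for a bounded block of colour classes and admitting two interchangeable ``locally frozen'' colour patterns, glued along the cycle so that the global colouring is frozen across all $k$ classes while the per-gadget pattern can be flipped independently, producing the inequivalent colouring. The creative part is engineering the gadget so that the most efficient members attain $\Delta\le (9k-1)/5$, matching \cref{thm:triangle_free_ii}; everything else is routine verification — triangle-freeness (local to the gadgets and gluings), frozenness (pair of classes by pair of classes), inequivalence (the flipped gadget changes which classes are ``split'' along the cycle), and the degree bound — together with a density argument showing that the admissible intervals $\big[(\Delta+1-k)/(\Delta-1),(\Delta+2-k)/(\Delta-1)\big)\subseteq(4/9,1)$ of the members cover all of $(4/9,1)$ as the parameters vary.

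For \cref{thm:triangle_free_ii} I would fix a connected triangle-free $G$ with a frozen $k$-colouring $c=\{A_1,\dots,A_k\}$ and $5\Delta(G)\le 9k-13$, note that the sub-range $\eta\le 1/3$ (i.e.\ $2\Delta\le 3k-4$) is already covered by \cref{thm:no_frozen} with $\omega=2$, and concentrate on $1/3<\eta\le 4/9$ — roughly $\tfrac 3 2 k<\Delta\le\tfrac 9 5 k$ — which should need a refinement of that argument exploiting triangle-freeness. Assuming for contradiction a proper $k$-colouring $c'$ that is not a permutation of $c$, I would proceed as follows. First, frozenness forces every vertex to have a neighbour of every colour but its own, so $\delta(G)\ge k-1$ and, as $G$ is triangle-free, each neighbourhood is an independent set surjecting onto the other $k-1$ colours. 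Second, a degree count on the bipartite overlap graph between the classes of $c$ and those of $c'$ (all $k$ left-degrees are $\ge 1$, so if every right-degree were $\le 1$ the overlap graph would be a perfect matching, i.e.\ $c'$ a permutation) yields a witness: some $c'$-colour $a$ occurs on two distinct $c$-classes $A_1,A_2$; set $X=A_1\cap(c')^{-1}(a)$ and $Y=A_2\cap(c')^{-1}(a)$, both non-empty, with no edges between them, while $G[A_1,A_2]$ is connected. Third — the heart of the matter — I would run a local analysis anchored at this witness: every vertex of $X$ must reach all $k$ colours of $c$ inside a neighbourhood that avoids $c'$-colour $a$ and avoids $Y$, symmetrically for $Y$, and the connectivity of the various $G[A_i,A_j]$ limits how the $c'$-classes can spread over the $A_i$; optimising over the possible colour multiplicities in such a neighbourhood, one forces a vertex of degree strictly larger than $(9k-13)/5$, the contradiction.

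I expect the main obstacle to be this third step: isolating exactly which local colour patterns around the witness are unavoidable, and proving that the resulting extremal ratio is \emph{precisely} $9/5$ (so that it matches the construction), will require a careful case analysis — or a small linear program — over colour multiplicities in a neighbourhood, leaning on the triangle-free constraint throughout. Secondary loose ends are: small values of $k$, where one has to check directly that no triangle-free graph of the relevant bounded maximum degree has a frozen $k$-colouring at all (frozenness already forces a non-bipartite graph of minimum degree $\ge k-1$); disconnected graphs, which admit no frozen colouring; and, in \cref{thm:triangle_free_i}, verifying by hand the finitely many small instances that the asymptotic interval-covering argument does not reach.
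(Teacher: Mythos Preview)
Your plan for \ref{thm:triangle_free_ii} takes a wrong turn. You propose to compare the frozen colouring $c$ with a second colouring $c'$, locate a ``witness'' class of $c'$ split across two classes of $c$, and then run a local analysis of colour multiplicities in the neighbourhood of a witness vertex to force degree $>\tfrac{9k-13}{5}$. But the second colouring $c'$ is irrelevant here: the paper never touches it (non-uniqueness is used only to rule out the clique). The $9/5$ ratio does not come from any local neighbourhood count; it comes from a \emph{global} average-degree bound over a smallest colour class $U$ of the frozen colouring $\alpha$. By \cref{lem:avgdeg_frozen}, if $|U|\ge 5$ and $n\ge k|U|$ then $\deg(U)/|U|\ge 2(k-1)-(k-1)/|U|\ge \tfrac{9}{5}(k-1)$, which already contradicts $\Delta<\tfrac{9}{5}(k-1)$. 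So the entire content of (ii) is the structural claim that triangle-freeness forces every colour class of a frozen colouring to have size at least five: no vertex can dominate a colour class (else a triangle), which rules out sizes one and two; a short argument rules out three classes of size three; and a (computer-assisted) check rules out four classes of size four. Your local-analysis route gives no visible mechanism producing the constant $9/5$, and the ``small linear program over colour multiplicities'' you allude to has no obvious formulation that outputs this ratio.

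For \ref{thm:triangle_free_i} your ``cyclic chain of gadgets'' is also off: a cyclic gluing only connects consecutive blocks of colour classes, whereas frozenness requires \emph{every} pair of classes to induce a connected subgraph. The paper's construction is not chain-like at all: it takes $k$ classes of size five and, for \emph{every} pair $V_i,V_j$, joins label $x$ in $V_i$ to labels $x\pm 2\pmod 5$ in $V_j$. The arithmetic modulo five is what kills triangles (no solution to $a_{ij}+a_{j\ell}\equiv a_{i\ell}\pmod 5$ with each term in $\{2,3\}$), and a deterministic removal of one edge per bichromatic $10$-cycle drops the maximum degree to $(9k-1)/5$ while preserving frozenness; a separate $3$-colouring of $H_k[V_1\cup V_2\cup V_3]$ witnesses non-recolourability. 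The ``five'' here is precisely why the lower bound in (ii) is $9/5$, and your proposal does not surface this link.
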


Since \cref{thm:no_frozen} and \ref{thm:triangle_free_ii} guarantee that the main obstructions to recolourability do not occur in these ranges, they strongly motivate the following conjectures, which are tight because of the construction in \cite{MR4660623} and \ref{thm:triangle_free_i} respectively.
\begin{conjecture}\label{conj:recol_reed}
  Any graph $G$ is $k$-recolourable for all
  $k \ge \lceil \frac{1}{3}\omega(G) + {\frac{2}{3}(\Delta(G) +1)} \rceil$.
\end{conjecture}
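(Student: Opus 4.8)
The plan is to prove the contrapositive in quantitative form: if a connected graph $G$ carries a frozen $k$-colouring $c$ together with some $k$-colouring $c'$ that is not a colour permutation of $c$, then
\[
\omega(G) + 2\Delta(G) \;\ge\; 3k - 1 ,
\]
equivalently $k < \lceil \tfrac13\omega(G) + \tfrac23(\Delta(G)+1)\rceil$, which contradicts the hypothesis that $k$ is at least this threshold. It suffices to treat $\eta=\tfrac13$, since the threshold $\lceil \eta\omega(G)+(1-\eta)(\Delta(G)+1)\rceil$ only grows for smaller $\eta$, so the cases $\eta<\tfrac13$ follow. Write $V_1,\dots,V_k$ for the colour classes of $c$.

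\emph{Rigidity and localising the disagreement.} Since $c$ is frozen, each bipartite graph $G[V_i\cup V_j]$ is connected, hence uniquely $2$-colourable; consequently every vertex has a neighbour of every other colour (so $\delta(G)\ge k-1$), and for each $I\subseteq[k]$ the subgraph $G\bigl[\bigcup_{i\in I}V_i\bigr]$ is connected, with $c$ restricting to a frozen $|I|$-colouring of it. Call a class \emph{broken} if $c'$ is not constant on it. If no class is broken, then $i\mapsto c'(V_i)$ is well defined and injective (every two classes are joined by an edge), so $c'$ is a permutation of $c$ --- excluded. Moreover the broken classes cannot be too few: if $V_1,\dots,V_b$ are the broken ones with $b<k$, then $c'$ takes $k-b$ distinct values on the unbroken classes, and since every vertex of a broken class has a neighbour in each unbroken class, $c'$ restricted to the connected graph $G':=G[V_1\cup\dots\cup V_b]$ uses only the $b$ remaining colours; as $c|_{G'}$ is a frozen $b$-colouring of $G'$, an inductive argument on $k$ forces $c'|_{G'}$ to be a permutation of $c|_{G'}$, i.e.\ each of $V_1,\dots,V_b$ monochromatic, a contradiction --- so one is steered towards the situation in which every class is broken, and there every $c'$-class $W$ meets every $V_i$ in a proper subset (a vertex $u\in W\cap V_i$ has, for each $j\ne i$, a colour-$j$ neighbour, which lies in $V_j\setminus W$ as $W$ is independent, while $V_i\not\subseteq W$ since $V_i$ is broken).

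\emph{The crux: a discrepancy forces a clique or a large degree.} This is where I expect the real difficulty. With every class broken the colourings disagree pervasively --- each $V_i$ splits into at least two $c'$-classes, each $c'$-class straddles several $V_i$ below full size --- and I would track a minimal discrepancy, namely vertices $u,w$ in a common $V_i$ with $c'(u)\ne c'(w)$ joined by a short path inside some connected bipartite subgraph $G[V_i\cup V_j]$, and argue it cannot be absorbed locally. Propagating it through the web of connected bichromatic subgraphs should force each vertex it traverses either to see, across $c$ \emph{and} $c'$ together, roughly $2k-\omega(G)$ distinct colours in its neighbourhood (hence to have degree about $2k-\omega(G)$), or --- when several discrepancies pile onto a common neighbourhood --- to lie in a clique of size about $3k-2\Delta(G)$, extracted by iterating ``pass to the common neighbourhood''. (That the cliques in the extremal examples of \cite{MR4660623} have size $\Theta(\log k)$ suggests an iterative, not one-shot, extraction.) Either outcome yields $\omega(G)+2\Delta(G)\ge 3k-1$ via $\Delta(G)\ge\delta(G)\ge k-1$.

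\emph{Expected main obstacle.} The rigidity step is routine and the concluding arithmetic is straightforward; the difficulty is the crux --- converting the qualitative picture (every class broken; every $c'$-class meeting every $V_i$ below full size) into the sharp inequality with the coefficients $1$ and $2$, and dovetailing it with the cases in which only part of the colouring is disturbed. I would expect the right formalisation to first replace $c'$ by a colouring extremal in its Kempe class --- for instance one minimising $\sum_{W,i}|W\cap V_i|^2$, or the number of disconnected bichromatic pairs --- so that local optimality supplies per-vertex inequalities that sum to the global bound; ruling out cancellation and double-counting when the clique is assembled from several simultaneous discrepancies is the delicate point.
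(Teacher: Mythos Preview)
The statement is a \emph{conjecture} that the paper explicitly leaves open; your proposal does not prove it. The central gap is in your opening move: you take as the contrapositive that $G$ carries a \emph{frozen} $k$-colouring together with another $k$-colouring not a permutation of it. But the negation of ``$G$ is $k$-recolourable'' is merely that two $k$-colourings fail to be Kempe equivalent, and this does \emph{not} imply either of them is frozen. Frozen colourings are one obstruction to recolourability; the paper notes that others exist (the topological ones of Mohar and Salas), and that ruling out non-unique frozen colourings below the $\eta=1/3$ threshold --- which is precisely the content of \cref{thm:no_frozen} --- is only \emph{evidence} for \cref{conj:recol_reed}, not a proof of it. So even a flawless execution of your plan would yield \cref{thm:no_frozen}, not the conjecture.

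Viewed as an attempt at \cref{thm:no_frozen}, your sketch also diverges from the paper's argument and carries real gaps. The paper's proof is short and does not track broken classes or extract cliques from accumulated discrepancies: it first reduces to frozen colourings in which every class has size at least two, then applies \cref{lem:avgdeg_frozen} to a smallest class $U$ to obtain $\Delta(G)\ge\tfrac32(k-1)$ directly, whence $\eta>1/3$. Your ``crux'', by contrast, is a programme rather than an argument --- you concede you do not yet know how to obtain the sharp inequality with coefficients $1$ and $2$, and the clique-extraction and extremal-$c'$ ideas are speculative. Your inductive reduction in the rigidity step is also shaky: to invoke the induction hypothesis on $G'=G[V_1\cup\dots\cup V_b]$ you would need $b$ to sit at or above the $1/3$-threshold computed for $G'$, which is not automatic since $\omega(G')$ and $\Delta(G')$ both drop when passing to the subgraph.
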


\begin{conjecture}\label{conj:recol_triangle_free}
  Any triangle-free graph $G$ is $k$-recolourable for all
  $k \ge \lceil \frac{4}{9}\cdot 2 + {\frac{5}{9}(\Delta(G) +1)} \rceil$.
\end{conjecture}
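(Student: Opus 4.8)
The plan is to prove Kempe-connectivity by showing that every proper $k$-colouring can be transformed, through a sequence of Kempe changes, into one fixed canonical colouring $c_\star$; since Kempe-equivalence is an equivalence relation, connecting everything to $c_\star$ puts all $k$-colourings in a single class. Because $G$ is triangle-free we have $\omega(G)\le 2$, so the threshold reads $k\ge\lceil \tfrac{5}{9}(\Delta+1)+\tfrac{8}{9}\rceil$; in particular $k$ sits comfortably below $\Delta$. This is exactly why single-vertex recolouring moves (Glauber dynamics) cannot suffice on their own — graphs such as $K_{\Delta,\Delta}$ are triangle-free yet $\Delta$-degenerate — and the full strength of Kempe changes, transporting a colour along a bichromatic path, is genuinely needed.

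The engine of the argument is a potential function $\Phi$ measuring disagreement with $c_\star$ (for instance the number of vertices coloured differently, or, more robustly, a lexicographic record of the colour-class sizes). I would exploit the two elementary moves available in this regime: a vertex whose neighbourhood misses a colour can be recoloured by a trivial single-vertex Kempe change, and a vertex at the end of a bichromatic path can be freed by swapping that whole Kempe chain. The core lemma to establish is that, as long as the current colouring differs from $c_\star$, the triangle-free structure — every neighbourhood is independent, giving ample room to reroute colours — forces the existence of a Kempe change that strictly decreases $\Phi$. Here \cref{thm:triangle_free_ii} is the crucial ``no local obstruction'' input: below $\eta=4/9$ there is no frozen configuration, so not every pair of colours can span a connected subgraph, and any disconnected bichromatic subgraph supplies a non-trivial Kempe chain to work with.

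The threshold $4/9$ should enter through a counting or discharging estimate on a worst-case vertex. The hardest vertices are those of degree close to $\Delta$ whose neighbourhoods are colour-saturated (all $k$ colours appear), since these cannot be recoloured trivially. Triangle-freeness means their neighbours are mutually non-adjacent, so each neighbour carries its own independent colouring budget, and I would balance the $k\approx\tfrac{5}{9}\Delta$ available colours against the number of neighbours that must be rerouted simultaneously. Matching the extremal gadget of \cref{thm:triangle_free_i} (maximum degree $(9k-1)/5$), the budget should suffice exactly when $\eta<4/9$, i.e.\ for $k$ above the stated bound, and break down precisely at the construction's parameters.

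The main obstacle — and the reason this remains a conjecture rather than a theorem — is twofold. First, the absence of a frozen colouring is only necessary, not sufficient, for recolourability: one must genuinely exhibit progress-making Kempe changes and control that a chosen sequence of chains does not undo earlier progress, so proving monotonicity of $\Phi$ without cycling is delicate, since a single swap can move many vertices at once. Second, defining a canonical $c_\star$ that is both reachable from every colouring and robust under the required rerouting is nontrivial in the dense-neighbourhood regime; I expect this to demand an inductive absorption scheme that peels off a well-chosen independent set, recolours the remainder by induction on $\abs{V(G)}$ (or on $\Delta$), and reinserts the peeled set using the slack guaranteed by the threshold. Closing the gap between the frozen-colouring analysis and full Kempe-connectivity is exactly the content of the conjecture.
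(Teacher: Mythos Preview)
The statement you are attempting is \cref{conj:recol_triangle_free}, which the paper explicitly leaves as an \emph{open conjecture}; there is no proof in the paper to compare against. The authors state outright that proving it ``even for small positive $\eps$ seems challenging'' and would likely require either the first use of the probabilistic method in reconfiguration or a constructive proof of Reed's conjecture. So your proposal cannot match the paper's proof because none exists.

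As for the proposal itself, it is not a proof but a strategy outline, and you correctly identify its own gaps in your final paragraph. The two central problems are real and unresolved. First, your ``core lemma'' --- that triangle-freeness forces a $\Phi$-decreasing Kempe change whenever the current colouring differs from $c_\star$ --- is precisely the whole conjecture; invoking \ref{thm:triangle_free_ii} only tells you that no colouring is frozen, which guarantees \emph{some} nontrivial Kempe change exists, not one that makes progress toward $c_\star$. The paper emphasises exactly this distinction: absence of frozen colourings is the main known obstruction, but not the only conceivable one, and the gap between $\eta^\star$ and $\eps^\star$ is the content of the conjecture. Second, the counting/discharging estimate you sketch (``balance the $k\approx\tfrac{5}{9}\Delta$ available colours against the number of neighbours that must be rerouted'') has no concrete form here; the extremal construction in \ref{thm:triangle_free_i} tells you where a proof must be tight, but gives no mechanism for producing a recolouring sequence. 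Your inductive absorption idea is closer in spirit to what works for odd-hole-free graphs in \cref{sec:proof}, but that argument crucially uses the absence of odd holes to control Kempe chains through the deleted vertex, and triangle-free graphs have no such structural handle.

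In short: your write-up is an honest description of why the problem is hard, not a proof, and the paper agrees with you that it is open.
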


As a consequence of \cref{thm:no_frozen} (respectively
\cref{thm:triangle_free}), note that disproving
\cref{conj:recol_reed} (respectively \cref{conj:recol_triangle_free}) would
require new tools and would significantly improve our understanding of Kempe
recolouring and its obstructions.  Finally, all advances towards Reed's
conjecture rely on the probabilistic method. This proof method does not adapt
well to reconfiguration proofs. For this reason, proving \cref{conj:recol_reed} or 
\cref{conj:recol_triangle_free} even for small positive $\eps$ seems challenging and is likely to either be the
first application of the probabilistic method to reconfiguration, or to yield a
constructive proof of Reed's colouring question, which would be of independent
interest.

\subsection{Recolouring odd-hole-free graphs}
A \emph{hole} is an induced cycle of length at least four. The complement of a hole is an \emph{antihole}, and an odd-(anti)hole is an (anti)hole of odd size. The Strong Perfect Graph Theorem~\cite{chudnovsky2006strong} characterises perfect graphs, that is graphs with chromatic number equal to their clique number, as graphs that have neither odd-holes nor odd-antiholes.

Aravind, Karthick and Subramanian~\cite{aravind2011Bounding} proved the Local Reed Conjecture in the class of odd-hole-free graphs. Weil~\cite{2016arXiv161102063W} then observed this result generalises immediately to the class $\mathcal{H}$ of graphs whose odd-holes all contain some vertex of degree less than $f(G)$. Bonamy, Kaiser and Legrand-Duchesne~\cite{legrand-duchesne2024Exploring} gave an alternative proof of this result using Kempe changes:
\begin{theorem}[Bonamy, Kaiser and Legrand-Duchesne~\cite{legrand-duchesne2024Exploring}]\label{thm:reed_odd_hole}
    Every $k$-colouring of any graph $G \in \mathcal{H}$ is Kempe equivalent to a $f(G)$-colouring. 
\end{theorem}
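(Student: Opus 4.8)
The plan is to argue by induction on $|V(G)|$ and, for a fixed graph, to reduce the number of colours one at a time; it therefore suffices to prove the following step: if $G\in\mathcal H$ carries a colouring $\alpha$ with $k\ge f(G)+1$ colours, then $\alpha$ is Kempe equivalent to a colouring with at most $k-1$ colours. (A colouring using at most $f(G)$ colours already is an $f(G)$-colouring, so there is nothing to do when $k\le f(G)$.) Iterating this step proves the theorem.

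Call a vertex $v$ \emph{saturated} in a colouring $\beta$ if $\beta(v)$ together with the colours appearing on $N(v)$ cover all $k$ colours. A saturated vertex satisfies $\deg(v)\ge k-1\ge f(G)$, whence, using $f(G)\ge\lceil(\omega(v)+\deg(v)+1)/2\rceil$, also $\omega(v)\le f(G)-1$ and $\deg(v)\le 2f(G)-1-\omega(v)$. The engine of the proof is the following \emph{merging criterion}: if some colour class $C_c$ of $\alpha$ contains no saturated vertex, then $\alpha$ reduces. Indeed $C_c$ is independent and no vertex of $C_c$ gets recoloured to $c$, so the colours on the neighbourhood of each $v\in C_c$ stay fixed while we recolour the class; we may therefore recolour the vertices of $C_c$ one after another, each to some colour $\ne c$ missing from its neighbourhood — one exists because the vertex is unsaturated — and each such recolouring is a Kempe change on the singleton component $\{v\}$. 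Together they delete colour $c$. Hence from now on we may assume that every colour class of $\alpha$, and of every colouring Kempe equivalent to $\alpha$, contains a saturated vertex.

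Next I would exploit membership in $\mathcal H$ to peel off the low-degree vertices. Put $L=\{v\in V(G):\deg(v)<f(G)\}$. Every odd hole of $G$ meets $L$, so $G-L$ is odd-hole-free, hence lies in $\mathcal H$, and $f(G-L)\le f(G)$. If $L\ne\emptyset$, then $|V(G-L)|<|V(G)|$, so by induction $\alpha|_{G-L}$ is Kempe equivalent, inside $G-L$, to a colouring with at most $f(G-L)\le k-1$ colours. This sequence of Kempe changes lifts to $G$: before performing a Kempe change on a pair of colours $\{a,b\}$ we first "dodge" each vertex of $L$ currently coloured $a$ or $b$, recolouring it by a single Kempe change to a colour outside $\{a,b\}$ and absent from its neighbourhood — such a colour exists because $\deg(v)<f(G)\le k-1$ for $v\in L$, and in the degenerate case where momentarily no such colour is free the vertex is in fact isolated in the $\{a,b\}$-subgraph and needs no dodge. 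The resulting colouring $\beta$ of $G$ uses at most $k$ colours and agrees on $G-L$ with a colouring using at most $k-1$; if $\beta$ uses exactly $k$ colours, its extra colour class lies inside $L$, so consists of unsaturated vertices, and the merging criterion applied to $\beta$ finishes the job. In every case $\alpha$ is Kempe equivalent to a colouring with at most $k-1$ colours.

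The remaining case $L=\emptyset$, i.e. $\delta(G)\ge f(G)$, forces $G$ to be odd-hole-free and, by the same computation as above now applied with $\deg(v)\ge\delta(G)\ge f(G)$, gives $\omega(v)\le f(G)-1$ for every vertex $v$; this is the step I expect to be the main obstacle. Here one must show that an odd-hole-free graph of minimum degree at least $f(G)$ admits no colouring with $k\ge f(G)+1$ colours in which every colour class contains a saturated vertex — that is, that the merging criterion necessarily applies. I would attack this via the Strong Perfect Graph Theorem, splitting into the perfect case (treated through clique-cutset decomposition and the colouring theory of perfect graphs, using $\chi(G)=\omega(G)\le f(G)-1$) and the case where $G$ contains an induced odd antihole (where one plays the rigid structure of an odd antihole against the inequalities $\omega(v)\le f(G)-1\le\deg(v)\le 2f(G)-1-\omega(v)$ satisfied by the saturated vertices, reaching a contradiction by an exchange argument among them). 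Carrying out this last case — dealing in particular with clique cutsets and with several overlapping odd antiholes — is where the bulk of the work lies.
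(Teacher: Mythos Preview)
First, note that \cref{thm:reed_odd_hole} is quoted from~\cite{legrand-duchesne2024Exploring} and is not proved in the present paper; there is no in-paper proof to compare against. I therefore evaluate your argument on its own merits.

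Your merging criterion and your treatment of the case $L\ne\emptyset$ (peel off the low-degree vertices, apply the inductive hypothesis to the odd-hole-free graph $G-L$, and lift the Kempe changes by ``dodging'' the vertices of $L$) are essentially sound. In particular, your observation that a vertex of $L$ which cannot be dodged must be isolated in the relevant bichromatic subgraph, and hence cannot merge components of $G-L$, is correct.

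The genuine gap is the case $L=\emptyset$. You acknowledge it is ``where the bulk of the work lies'' and propose to invoke the Strong Perfect Graph Theorem, clique cutsets, and an analysis of odd antiholes. This is not a proof, and it is also the wrong direction: the case has a short, entirely elementary argument for which you already hold all the pieces. Since $L=\emptyset$ forces $G$ to be odd-hole-free, simply empty colour class $k$ directly. Pick any $v$ of colour $k$. If $v$ is unsaturated, recolour it. If $v$ is saturated, your own count gives at least $\omega(v)+1$ neighbours of $v$ whose colours appear exactly once in $N(v)$; as these cannot form a clique with $v$, two of them, say $u$ and $w$, are non-adjacent. The $\{\alpha(u),\alpha(w)\}$-Kempe chain through $u$ cannot contain $w$, because a shortest $\{\alpha(u),\alpha(w)\}$-path from $u$ to $w$ together with $v$ would be an induced odd cycle of length at least five (using that $u,w$ are the \emph{only} neighbours of $v$ in those two colours), contradicting odd-hole-freeness. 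Swap that chain; now $v$ misses the colour $\alpha(u)\ne k$ and can be recoloured out of class $k$. Neither the swap nor the recolouring touches colour $k$, so the class strictly shrinks; iterate until it is empty. This is precisely the mechanism that appears in the paper's proof of \cref{lem:induction} (the sentence ``$L \cup \{v\}$ would contain an induced odd cycle''). No Strong Perfect Graph Theorem, no antiholes, no clique cutsets are needed.
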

This result also implies that the graphs in $\mathcal{H}$, and in particular odd-hole-free graphs, have no frozen $k$-colourings for
$k > f(G)$. Again, as frozen colourings are the main known obstruction to recolourability, this motivated the following conjecture:
\begin{conjecture}[Bonamy, Kaiser and Legrand-Duchesne~\cite{legrand-duchesne2024Exploring}]\label{conj:odd_hole_reed}
  All odd-hole-free graphs of maximum degree $\Delta$ and clique number $\omega$ are
  $k$-recolourable for $k \ge \left\lceil \frac{\omega + \Delta + 1}{2}\right\rceil$.
\end{conjecture}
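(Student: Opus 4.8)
The plan is to build on \cref{thm:reed_odd_hole}. Since an odd-hole-free graph has no odd holes at all, it trivially belongs to the class $\mathcal{H}$, so \cref{thm:reed_odd_hole} applies: every $k$-colouring of an odd-hole-free graph $G$ is Kempe-equivalent to one using at most $f(G)$ colours. Moreover, since $\omega(v)\le\omega(G)$ and $\deg(v)\le\Delta(G)$ for every vertex $v$, monotonicity of the ceiling gives $f(G)\le\lceil(\omega(G)+\Delta(G)+1)/2\rceil\le k$. Hence, given two $k$-colourings of $G$, one may first bring each of them, by Kempe changes, to a colouring using at most $f(G)$ colours, and it remains only to prove: any two $k$-colourings of $G$ that use at most $f(G)$ colours are Kempe-equivalent. (We may assume $G$ connected, as recolourability is decided componentwise.)

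For this step I would induct on $|V(G)|$, and it is here that an extra colour is used: for $k=\lceil(\omega(G)+\Delta(G)+1)/2\rceil$ the generic situation is $k\ge f(G)+1$, with equality $k=f(G)$ only in a borderline case. Fix a reference colouring $c^\star$ of $G$ using at most $f(G)$ colours; it exists because $\chi(G)\le f(G)$, itself a consequence of \cref{thm:reed_odd_hole}. The target becomes to show every $k$-colouring $c$ of $G$ using at most $f(G)$ colours is Kempe-equivalent to $c^\star$. Delete a vertex $v$ (treating the components of $G-v$ separately if $v$ is a cutvertex); each piece is odd-hole-free with its value of $f$ at most $f(G)$, so by \cref{thm:reed_odd_hole} and the induction hypothesis, $c|_{G-v}$ and $c^\star|_{G-v}$ are linked by a sequence of Kempe changes performed inside $G-v$. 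The heart of the argument is to lift this sequence to $G$: whenever a Kempe change on an $\{a,b\}$-chain $C$ would interfere with $v$ — that is, $v$'s current colour lies in $\{a,b\}$ and $v$ has a neighbour in $C$ — one first performs a singleton Kempe change recolouring $v$ to a colour outside $\{a,b\}$ absent from $N(v)$; such a colour exists whenever $v$ has at least two free colours, because in exactly that situation the partner of $v$'s colour inside $\{a,b\}$ is forced onto $N(v)$. A final singleton change adjusts $v$ to $c^\star(v)$.

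The main obstacle — and the reason this approach reaches \cref{conj:odd_hole_reed} only up to an additive constant in the number of colours — is the choice of the deleted vertex $v$. The reactive recolouring above requires $v$ to retain at least two free colours at every step, i.e.\ $\deg(v)$ must stay comfortably below $k$; but in dense odd-hole-free graphs every vertex can have degree far exceeding $k$, and during the lifted sequence the neighbourhood of any fixed $v$ may temporarily exhaust the whole palette. Removing this bottleneck — for instance by deleting an entire colour class of $c^\star$ in a single move, or by invoking finer structural results for odd-hole-free graphs (their decomposition theory, or perfection once odd antiholes are also excluded, via the Strong Perfect Graph Theorem) to expose a genuinely recolourable part — is precisely what would be needed to bring $k$ down to $\lceil(\omega(G)+\Delta(G)+1)/2\rceil$. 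Since the matching frozen-colouring construction shows that no smaller $k$ can work, \cref{conj:odd_hole_reed} would then be exactly tight, and it is this last colour that I expect to be the genuinely hard part.
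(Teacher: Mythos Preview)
This statement is \cref{conj:odd_hole_reed}, a \emph{conjecture} that the paper does not prove: \ref{thm:main_i} settles only the range $k>f(G)$, and the paper explicitly leaves the case $k=f(G)$ open. Your proposal is not a proof either, and you say so yourself in the final paragraph. So there is no disagreement about the status of the statement; the question is whether your sketch recovers at least what the paper does prove.

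It does not. Your lifting step hinges on being able, at every stage of the $G'$-sequence, to recolour $v$ to some colour outside $\{a,b\}$ that is absent from $N(v)$. As you note, this forces $\deg(v)\le k-2$, and you propose to choose $v$ accordingly. But $k\approx(\omega+\Delta+1)/2$ can be far below the minimum degree of an odd-hole-free graph (take e.g.\ the graphs $G_k$ of \cref{ssec:odd_hole_tight}, which are $2(k-1)$-regular with $\omega=3$), so no such $v$ need exist. Your approach therefore does not establish even the $k>f(G)$ case, let alone the full conjecture. The paper's proof of \ref{thm:main_i} faces exactly this obstruction and resolves it differently: rather than insisting that $v$ always have a free colour, it allows the lifted colouring to \emph{disagree} with the target $G'$-colouring on a controlled set of $\{a,b\}$-Kempe chains (the ``faithful'' condition), and uses the bound $k>f(G)$ together with odd-hole-freeness to find a non-edge among uniquely-coloured neighbours of $v$, enabling a Kempe swap that frees a colour at $v$ without creating an odd cycle. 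That structural use of odd-hole-freeness is the genuine content your sketch is missing.
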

In~\cite{legrand-duchesne2024Exploring}, the same authors proved
a weakening of \cref{conj:odd_hole_reed}: all odd-hole-free graphs $G$ are~$k$-recolourable for~$k > \lceil (\chi(G) + \Delta(G) + 1)/2\rceil$, which settles  the special case of perfect graphs. Moreover, as Reed's conjecture holds for odd-hole-free graphs, this also implies that they are $k$-recolourable for~${k > \lceil(\omega(G) + 3(\Delta(G) +1))/4\rceil}$.
The main contribution of this paper improves this result and confirms \cref{conj:odd_hole_reed} up to using one extra colour (which is almost tight due to a folklore contruction of frozen colouring): 
\begin{restatable}{theorem}{maintheorem}
  \label{thm:main}
  \begin{enumerate}[label =(\roman*), ref=\cref{thm:main}.(\roman*)]
  \item[]
  \item\label{thm:main_i} Let $G$ be a graph in which every odd-hole contains a vertex of degree at most $f(G)$. Then~$G$ is~$k$-recolourable for all $k > f(G)$.
  \item\label{thm:main_ii} For all $k \ge 3$, there is a non-$k$-recolourable perfect graph $G$ with $k=\left\lceil \frac{\omega(G)+\Delta(G)+1}{2}\right\rceil-1$.
  \end{enumerate}
\end{restatable}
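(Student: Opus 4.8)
We treat the two parts in turn.

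\textbf{Part \ref{thm:main_i}.} My first move would be to invoke \cref{thm:reed_odd_hole}: every $k$-colouring of $G$ is Kempe equivalent to some $f(G)$-colouring. (That result is phrased for graphs whose odd-holes all contain a vertex of degree \emph{strictly} less than $f(G)$, so the first task is to check that the argument of Bonamy, Kaiser and Legrand-Duchesne still goes through under the weaker hypothesis ``at most $f(G)$'' once $k>f(G)$ colours are available -- morally the extra colour provides exactly the room that the strict inequality was buying.) Granting this, it remains to show that, with $k\ge f(G)+1$ colours, any two $f(G)$-colourings of $G$ are Kempe equivalent, equivalently that every $f(G)$-colouring can be driven to a single fixed canonical $f(G)$-colouring $c_0$. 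This is the genuinely new content, and I would attack it by induction on $|V(G)|$: delete a well-chosen vertex $v$ (ideally one of degree at most $f(G)$, which exists unless the hypothesis forces $G$ to be essentially regular and hence, together with the odd-hole condition, triangle-free and odd-hole-free, i.e.\ bipartite -- a case to be treated directly), park $v$ on a spare colour, recolour $G-v$ to the restriction of $c_0$ by the induction hypothesis, and re-insert $v$ (after the recursion $v$ sees fewer than $k$ colours, so it can be moved back onto $c_0(v)$).

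The main obstacle in Part \ref{thm:main_i} is the interaction between the Kempe chains: a chain used to recolour $G-v$, once lifted to $G$, may sweep through $v$, and the chains used to re-insert $v$ may propagate into $G-v$ and undo the colouring just established there. Controlling this -- via a careful deletion order together with a potential argument, and crucially using the degree bound supplied by the odd-hole hypothesis to keep the relevant chains local and the induction class closed under deletion -- is the heart of the proof. I expect it to reuse, and mildly extend, the mechanism underlying the proof of \cref{thm:reed_odd_hole} and of the local Reed bound in \cite{aravind2011Bounding,legrand-duchesne2024Exploring}.

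\textbf{Part \ref{thm:main_ii}.} Here I would exhibit an explicit family $G_k$. For $k=3$ take the triangular prism $K_3\mathbin{\square}K_2$, with triangles $a_1a_2a_3$, $b_1b_2b_3$ and rungs $a_ib_i$: it is perfect, has $\omega=\Delta=3$ (so $\lceil(\omega+\Delta+1)/2\rceil-1=3$), and carries the two proper $3$-colourings whose colour-class partitions are $\{\{a_1,b_3\},\{a_2,b_1\},\{a_3,b_2\}\}$ and $\{\{a_1,b_2\},\{a_2,b_3\},\{a_3,b_1\}\}$, each of which is \emph{frozen} (every two colour classes span a connected subgraph). A frozen colouring has a Kempe-invariant colour-class partition, so these two colourings lie in different Kempe classes and the prism is not $3$-recolourable. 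For $k\ge4$ I would pad the prism: let $G_k$ consist of the prism together with a clique $\{z_4,\dots,z_k\}$, each $z_i$ joined to every prism vertex except $a_1$. Then $G_k-a_1$ is the full join of $(\text{prism})-a_1$ -- perfect, being an induced subgraph of the prism -- with $K_{k-3}$, hence perfect; and re-adding the degree-$3$ vertex $a_1$ creates no odd hole or antihole, since an induced cycle through $a_1$ must use two non-adjacent neighbours of $a_1$ and, a join having no long induced path, cannot have length $\ge5$. One computes $\omega(G_k)=k$ (the clique $\{b_1,b_2,b_3,z_4,\dots,z_k\}$) and $\Delta(G_k)=k+1$ (attained at each $z_i$), so $\lceil(\omega+\Delta+1)/2\rceil-1=k$. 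Finally, in any $k$-colouring the $z_i$'s occupy $k-3$ colours and force the five prism vertices other than $a_1$ onto the remaining three; tracing the few cases shows that the \emph{frozen} $k$-colourings of $G_k$ are precisely the two frozen $3$-colourings of the prism, with the spare colours distributed arbitrarily over the $z_i$'s. Hence $G_k$ has a frozen $k$-colouring that is not unique up to permutation, so $G_k$ is not $k$-recolourable.

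The work in Part \ref{thm:main_ii} is mostly verification: perfectness of $G_k$ (the one genuinely graph-theoretic point, but it reduces to the join structure together with the low degree of $a_1$), the exact values of $\omega$ and $\Delta$, and the claim that no other colour-class partition of $G_k$ is frozen, so that the two prism colourings really do certify non-recolourability. No single step looks hard; the constraint that dictates the precise attachment of the $z_i$'s (to all of the prism but $a_1$) is that it forces $\Delta=k+1$ and so lands the parameters on $\lceil(\omega+\Delta+1)/2\rceil-1=k$ for \emph{every} $k\ge3$ rather than only for a sparse set of values.
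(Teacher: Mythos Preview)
Your construction is correct but different from the paper's. You pad the triangular prism with a near-universal clique; the paper instead uses a single uniform family, the tensor product $G_k=K_3\times K_k$ (equivalently, the complement of the $3\times k$ Rook graph). This graph is perfect by a result of Ravindra, is $2(k-1)$-regular with $\omega=3$, so $k=\lceil(\omega+\Delta+1)/2\rceil-1$ on the nose, and it visibly carries a frozen $k$-colouring (by the $K_k$ coordinate) alongside a $3$-colouring (by the $K_3$ coordinate). Your prism-plus-clique gadget gives the same parameters and also works, so either approach is fine; the paper's has the advantage of avoiding the separate base case and the somewhat delicate perfectness check when re-attaching $a_1$.

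\textbf{Part (i).} Here there is a genuine gap. You have correctly located the skeleton of the argument --- induct on $|V(G)|$, delete a vertex $v$, lift the recolouring sequence of $G\setminus v$ back to $G$ --- and you have correctly named the obstacle: a Kempe chain $K'$ of $G\setminus v$ can, once extended to $G$, sweep through $v$ and merge with other chains, so performing it in $G$ does not reproduce $\perform{(\alpha'_{i-1})}{K'}$ on $G\setminus v$. But you do not supply the mechanism that controls this, and the hope that it ``reuses and mildly extends'' the proof of \cref{thm:reed_odd_hole} is not borne out: the new idea is substantial. The paper does not pass through \cref{thm:reed_odd_hole} at all. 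Instead it introduces the notion of a colouring of $G$ being \emph{faithful} to a colouring of $G\setminus v$: they are allowed to differ, but only on a set $\cB$ of $\{a,b\}$-Kempe chains each of which touches $N(v)$ in colour $a$. One then maintains, along the sequence $\alpha'_0,\dots,\alpha'_h$, a parallel sequence $\alpha_0,\dots,\alpha_h$ in $G$ with $\alpha_i$ faithful to $\alpha'_i$. The inductive step (the paper's \cref{lem:induction}) is where the hypothesis $k>f(G)$ is actually used: from $\deg(v)\ge 2(k-1)-|S|$ with $S$ the set of singly-represented colours in $N(v)$, one gets $|S|\ge\omega(v)+1$, hence two non-adjacent neighbours $u,w$ of $v$ with unique, non-problematic colours; swapping the $\{\gamma(u),\gamma(w)\}$-chain through $u$ frees a colour at $v$ (odd-hole-freeness is used here to ensure $u$ and $w$ lie in different chains), after which $v$ can be moved out of the way, the old bad chains flipped, and $K'$ performed, creating a single new bad chain. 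Your plan contains none of this; in particular, your choice of $v$ as a vertex of degree at most $f(G)$ would let you park $v$ on a spare colour initially, but does nothing to stop the lifted chains from repeatedly hitting $v$ during the sequence.
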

Therefore, the only remaining open case for \cref{conj:odd_hole_reed} is $k = f(G)$. Moreover, this also shows that \cref{thm:reed_odd_hole} is tight up to one colour, even for perfect graphs.

\subsection{Summary}
Given a class $\mathcal{G}$ of connected graphs, let $\eps^\star(\mathcal{G})$ be
the supremum of all $\eps$ such that there exists $c \ge 0$ such that all graphs in
$\mathcal{G}$ are $k$-recolourable for all
$k \ge \lceil \eps\omega + (1-\eps)(\Delta+1)\rceil + c$. Denote
$\eta^\star(\mathcal{G})$ be the infimum $\eta$ such that there exists a non-$k$-recolourable graph in $\mathcal{G}$ with a frozen $k$-colouring and
$k = \lceil \eta\omega + (1-\eta)(\Delta+1)\rceil$. We have
$\eps^\star(\mathcal{G}) \le \eta^\star(\mathcal{G})$ for any graph class
$\mathcal{G}$. Our results can be summarised as follows:

\begin{center}
\begin{tabular}{|l|C{4.5cm}|C{4.5cm}|C{4.5cm}|}
  \hline
  $\mathcal{G}$ & all graphs & triangle-free & odd-hole-free\\
  \hline
  $\eps^\star$ & \cref{conj:recol_reed} & \cref{conj:recol_triangle_free} & 1/2\quad (\ref{thm:main_i}) \\
  \hline
  $\eta^\star$ & 1/3\quad (\cref{thm:no_frozen})  & 4/9\quad (\cref{thm:triangle_free}) & 1/2\quad (\ref{thm:main_ii}) \\
  \hline
\end{tabular}
\end{center}

After providing some notations and basic definitions in \Cref{sec:prel}, we
prove \Cref{thm:main} in \Cref{sec:proof} and the bounds on $\eta^\star$  for general graphs and triangle free-graphs in \Cref{sec:tight,sec:triangle_free} respectively.

\section{Preliminaries}\label{sec:prel}

We now define the notation that we will be using in our proofs. Let~$G$ be a
graph.
If $v\in V(G)$, let $G\setminus v$ be the subgraph of $G$ obtained by removing $v$ as well as all edges incident to it.
Similarly, if $S\subset V(G)$, $G\setminus S$ is the subgraph of $G$ obtained by removing all vertices in $S$ as well as edges incident to them. 
We denote $N(u)$ and $N[u]$ the open and closed neighbourhoods of a vertex $u$
Given a vertex $v$ and subset $U$ of the vertices, we
denote $\deg(v,U) = |N(v) \cap U|$. Likewise, given two subset $U_1$ and $U_2$
of vertices, we denote $\deg(U_1,U_2)$ the number of edges between $U_1$ and $U_2$, and denote $\deg(U) = \deg(U,V(G)\setminus U)$.

A~\emph{$k$-colouring} of~$G$ is a
function~$\gamma : V(G) \rightarrow [k]$ such $\gamma(u) \neq \gamma(v)$ for all
$uv \in E(G)$. 
For any subgraph~$H$ of~$G$, we
let~$\gamma(H) \coloneqq \{\gamma(v): v \in V(H)\}$ be the set of colours used
in the subgraph~$H$, and let~$\gamma|_{H}$ be the colouring~$\gamma$ restricted
to~$H$.  We say that two colourings $\alpha$ and $\beta$ \emph{agree} on a set
$X \subseteq V(G)$ if $\alpha|_{X} = \beta|_{X}$. Similarly, we say that
$\alpha$ and $\beta$ \emph{differ} on $X$, if $X$ is the set of all vertices
$v \in V(G)$ such that $\alpha(v) \neq \beta(v)$.  We say that $u$
\emph{misses} the colour $c$ in the colouring $\gamma$ if
$c \notin \gamma(N[u])$.

An \emph{$\{a,b\}$-Kempe chain} $K$ of $\gamma$ is a connected component of the subgraph induced by vertices of colours $a$ or $b$ (see~\cref{fig:kempe}). We say that $a$ and $b$ are the colours \emph{used} in $K$. We denote~$K_{v, c}(G, \gamma)$ the $\{\gamma(v),c\}$-Kempe chain that contains the vertex $v$. Let~$K(G, \gamma)$ be the set of all Kempe chains in~$G$ under the
colouring~$\gamma$, where we also allow the empty Kempe chain (the graph with no vertices)
to be part of this set. For an $\{a,b\}$-Kempe chain~$K \in K(G, \gamma)$,
let~$\perform{\gamma}{K}$ be the colouring obtained after interchanging the colours $a$ and $b$ on
all the vertices in~$K$. We also refer to this operation as
performing the \emph{Kempe change}~$K$ in~$G$ under~$\gamma$. 
Given an induced subgraph $G'$ of $G$, a colouring $\gamma$ of $G$ and an $\{a,b\}$-Kempe chain $K$ of $\gamma|_{G'}$, the \emph{extension} of $K$ to $\gamma$ is the $\{a,b\}$-Kempe chain of $\gamma$ which contains the vertices of $K$.

We now state a lemma that will be used in \cref{sec:tight} and \cref{sec:triangle_free} to lower bound the
maximum degree of a graph: 
\begin{lemma}\label{lem:avgdeg_frozen}
 Any colour class $U$ of a frozen $k$-colouring $\alpha$ of an $n$-vertex graph $G$ has average degree 
 $$\frac{\deg(U)}{|U|} \ge \frac{n-|U| + (k-1)(|U|-1)}{|U|}.$$
\end{lemma}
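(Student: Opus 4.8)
The plan is to unpack the definition of a frozen colouring into a lower bound on the number of edges leaving $U$, by applying a spanning-tree edge count to each bichromatic subgraph involving $U$ and summing over the other colour classes.

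Write $C_1,\dots,C_k$ for the colour classes of $\alpha$, and say $U=C_i$. First I would fix some $j\neq i$ and consider the $\{i,j\}$-bichromatic subgraph $G[C_i\cup C_j]$. Since $\alpha$ is frozen, this subgraph is connected. Because $C_i$ and $C_j$ are independent sets, every edge of $G[C_i\cup C_j]$ joins $C_i$ to $C_j$; and a connected graph on $|C_i|+|C_j|$ vertices has at least $|C_i|+|C_j|-1$ edges (a spanning tree). Hence
$$\deg(U,C_j)\;\ge\;|U|+|C_j|-1.$$

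Now I would sum this inequality over all $j\in[k]\setminus\{i\}$. Since the sets $C_j$ with $j\neq i$ partition $V(G)\setminus U$, we have $\sum_{j\neq i}\deg(U,C_j)=\deg(U)$ and $\sum_{j\neq i}|C_j|=n-|U|$, and there are $k-1$ summands, so
$$\deg(U)\;\ge\;(k-1)(|U|-1)+(n-|U|).$$
Dividing by $|U|$ yields the stated bound.

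There is essentially no obstacle here; the argument is a direct translation of the frozen condition. The only point requiring a word of care is degenerate cases (for instance an empty colour class, or $|U|=1$), but the bound $\deg(U,C_j)\ge|U|+|C_j|-1$ holds verbatim as soon as $G[U\cup C_j]$ is connected, which is exactly what being frozen provides; and if $U=\emptyset$ the statement is vacuous.
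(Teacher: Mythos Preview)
Your argument is correct and matches the paper's proof essentially line for line: both use that each bichromatic subgraph $G[U\cup U']$ is connected (by frozenness) and hence contributes at least $|U|+|U'|-1$ edges between $U$ and $U'$, then sum over the $k-1$ other classes and divide by $|U|$. There is nothing to add.
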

\begin{proof}
For each other colour class $U'$, the graph $G[U\cup U']$ is connected because $\alpha$ is frozen. Hence $G[U\cup U']$ has at least $|U| + |U'| -1$ edges and 
$$\frac{\deg(U)}{|U|} \ge \sum_{U'} \frac{|U| + |U'|-1}{|U|}\ge\frac{n-|U| + (k-1)(|U|-1)}{|U|}.$$
\end{proof}

Finally, the following remark shows that the definition of $\eta^\star$ can be simplified:\begin{remark}\label{rem:etastar}
For any fixed graph $G$, the function ${\eta \mapsto \eta\omega(G) + (1-\eta)(\Delta(G) +1)}$
is non-increasing, so we may omit the ceiling in the definition of $\eta^\star$.
\end{remark}

\section{Recolourability of odd-hole-free graphs}\label{sec:proof}
This section is dedicated to the proof of \cref{thm:main}, that we recall:
\maintheorem*

Recall also that $f(G) = \max_{v \in V(G)} \left\lceil \left(\omega(v) + \deg(v) + 1\right) / 2\right\rceil$.
We first prove \ref{thm:main_i} and prove \ref{thm:main_ii} in \cref{ssec:odd_hole_tight}.

Let $G$ be a graph in which all odd-holes contain some vertex of degree at most $f(G)$ and let~$k$ be an integer satisfying~$k > f(G)$.
We proceed by induction on the
number of vertices of~$G$. 

If $G$ contains an odd-hole, let $v$ be a vertex of degree at most $f(G)$ in it. A classical result (namely Lemma 2.3 in \cite{VERGNAS198195}, used in a variety of other Kempe recolouring proofs) shows that $G$ is $k$-recolourable, as $k > f(G) \ge \deg(v)$ and $G\setminus v$ is $k$-recolourable by induction. Therefore one can assume that $G$ has no odd-holes.

Let~$\alpha$ and~$\beta$ be two~$k$-colourings of~$G$. We will transform~$\alpha$
into~$\beta$ using a sequence of Kempe chains in~$G$. 
Let~$v \in V(G)$ be a vertex of~$G$, and
let~$G' \coloneqq G \setminus v$. 
Let~$\alpha' \coloneqq \alpha|_{G'}$
and~$\beta' \coloneqq \beta|_{G'}$ be the restriction of colourings~$\alpha$
and~$\beta$ to~$G'$, respectively.
By the induction hypothesis, there exists a
sequence~$\alpha' = \alpha'_0, \alpha'_1, \dots, \alpha'_h = \beta'$ of $k$-colourings
of~$G'$ such that~$\alpha'_i = \perform{\alpha'_{i - 1}}{K_i}$, for some sequence of Kempe
chains~$K_1, K_2, \dots, K_h$ with~$K_i \in K(G', \alpha'_{i - 1})$ for~$i = 1, \dots, h$.

A natural strategy to recolour $\alpha$ into $\beta$ would be to apply the Kempe changes $K_1, \dots, K_h$ to $\alpha$. Unfortunately, any of these Kempe chains might not be a valid Kempe chain in $G$ if it uses the colour of $v$. To circumvent this issue, the key idea is to allow for some controlled errors at each step. We formalise this error control with the following definition. We say that a colouring $\gamma$ of $G$ is \emph{faithful} to a colouring $\gamma'$ of $G'$ if there exist two colours $a$ and $b$ such that the following conditions hold:
\begin{enumerate}[label ={{\emph{\textbf{C\arabic{enumi}}}}}, ref=\textbf{C\arabic{enumi}}]   
  \item\label{item:one-away} The colourings $\gamma|_{G'}$ and $\gamma'$ differ on a set $\cB$ of $\{a, b\}$-Kempe chains under $\gamma'$, and the vertices of $\cB$ only use colours $a,b$ under $\gamma$. We call~$\cB$ the \emph{bad} Kempe chains for $\gamma$ and $\gamma'$.
  \item\label{item:bad-chain-is-bad} Every Kempe chain in $\cB$ contains a neighbour of $v$ coloured $a$ in $\gamma'$.
\end{enumerate}

We will construct a sequence of colourings~$\alpha = \alpha_0, \alpha_1, \dots, \alpha_h$ of~$G$ such that each $\alpha_i$ is faithful to $\alpha_i'$ and for any $1 \le i \le h$, the colourings $\alpha_i$ and $\alpha_{i-1}$ are Kempe equivalent in $G$.
Assume that we have already constructed the sequence $\alpha_0, \dots, \alpha_{i-1}$ for some $1 \le i \le h$. We use two lemmas to define $\alpha_i$: \cref{lem:base} in the favourable case where $\alpha_{i-1}'$ and $\alpha_{i-1}$ agree on $G'$, and \cref{lem:induction} in the more involved case where $\alpha_{i-1}$ is faithful to $\alpha_{i-1}'$ without extending it. These two lemmas will be proved in \cref{sec:restriction} and \cref{sec:faithful} respectively.

Once the sequence $\alpha = \alpha_0, \alpha_1, \dots, \alpha_h$ is constructed, we only need to argue that $\beta$ is Kempe equivalent to $\alpha_h$, in order to conclude the proof of \ref{thm:main_i}.
We first argue that $\alpha_h$ is Kempe equivalent to a colouring $\gamma$ that agrees with $\beta$ on $G'$, then $\beta$ can be obtained from $\gamma$ by simply recolouring $v$ to its final colour.
Let $\cB$ be the set of bad $\{a,b\}$-Kempe chains for $\alpha_h$ and $\alpha_h'$. If these Kempe chains are also Kempe chains of $G$, then performing them results in a colouring $\gamma$ that agrees with $\beta$ on $G'$. If not, then $v$ is coloured $a$ or $b$ and as $\beta$ is a colouring of $G$, this implies that $\cB$ contains all $\{a,b\}$-Kempe chains of $\alpha_h'$ that contain a neighbour of $v$. Therefore there exists an $\{a,b\}$-Kempe chain $K$ of $\alpha_h$ whose restriction to $G'$ is $\cB$. As a result, $\gamma = \perform{\alpha_h}{K}$ agrees with $\beta$ on $G'$, which concludes the proof of \ref{thm:main_i}.

\subsection{Defining \texorpdfstring{$\alpha_i$}{alpha(i)} when \texorpdfstring{$\alpha_{i-1}$}{alpha(i-1)} and \texorpdfstring{$\alpha'_{i-1}$}{alpha'(i-1)} agree on \texorpdfstring{$G'$}{G'}}\label{sec:restriction}
\begin{lemma}\label{lem:base}
Let $G$ be an odd-hole-free graph, $v\in V(G)$ arbitrary and $G'\coloneqq G\setminus v$.
  Let $\gamma$ and $\gamma'$ be colourings of $G$ and $G'$ respectively, such that $\gamma|_{G'} = \gamma'$. Let $K'$ be an $\{a,b\}$-Kempe chain of $\gamma'$, and let~$K$ be the extension of~$K'$ in~$\gamma$. Then the colouring $\delta \coloneq \perform{\gamma}{K}$ is faithful to $\delta' \coloneq \perform{\gamma'}{K'}$.
\end{lemma}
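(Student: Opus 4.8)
The plan is a short case analysis on whether the deleted vertex $v$ lies in the Kempe chain $K$. If $v \notin K$, then $K$ consists only of vertices of $G'$; since $\gamma$ and $\gamma'$ agree on $G'$, the chain $K$ is also an $\{a,b\}$-Kempe chain of $\gamma'$, and as it contains the inclusion-maximal chain $K'$ it must equal $K'$. Hence $\delta|_{G'} = \perform{\gamma'}{K'} = \delta'$, so $\delta$ is faithful to $\delta'$ with no bad chains at all, conditions \ref{item:one-away} and \ref{item:bad-chain-is-bad} holding vacuously.

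The substantive case is $v \in K$. Then $\gamma(v) \in \{a,b\}$, and after possibly swapping the names of $a$ and $b$ I may assume $\gamma(v) = b$; by properness every neighbour of $v$ lying in $K$ is then coloured $a$ under $\gamma$. I would next observe that the connected components of $K \setminus v$ are exactly the $\{a,b\}$-Kempe chains of $\gamma'$ that contain an $a$-coloured neighbour of $v$: each such chain joins $v$ inside the $\{a,b\}$-subgraph of $\gamma$, and conversely each component of $K \setminus v$ must touch $v$ because $K$ is connected. One of these components is $K'$ itself; call the others $K_1,\dots,K_m$ (possibly $m=0$), so that $K = \{v\} \cup K' \cup K_1 \cup \dots \cup K_m$. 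Performing the Kempe change $K$ swaps $a$ and $b$ on exactly these vertices, so $\delta|_{G'}$ differs from $\gamma' = \gamma|_{G'}$ exactly on $K' \cup K_1 \cup \dots \cup K_m$, whereas $\delta' = \perform{\gamma'}{K'}$ differs from $\gamma'$ only on $K'$; therefore $\delta|_{G'}$ and $\delta'$ differ exactly on $K_1 \cup \dots \cup K_m$, and I would take $\cB = \{K_1,\dots,K_m\}$. This choice satisfies \ref{item:one-away}: a Kempe change inside $K'$ changes neither the set of $\{a,b\}$-coloured vertices nor the subgraph induced on them, so $\delta'$ and $\gamma'$ have the same $\{a,b\}$-Kempe chains and each $K_i$ is one of them, and each $K_i \subseteq K$, so its vertices use only the colours $a,b$ under $\delta = \perform{\gamma}{K}$. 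It also satisfies \ref{item:bad-chain-is-bad}: each $K_i$ contains a neighbour of $v$ coloured $a$ under $\gamma'$, and since that neighbour lies outside $K'$ its colour is untouched by the change defining $\delta'$, so it is still coloured $a$ under $\delta'$.

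There is no real obstacle here, as the argument is pure bookkeeping, but the one step demanding attention is the labelling of the two colours: condition \ref{item:bad-chain-is-bad} is not symmetric in $a$ and $b$, so one must fix the convention $\gamma(v) = b$ (equivalently, arrange that the neighbours of $v$ sitting inside the bad chains are coloured $a$), and with the opposite convention \ref{item:bad-chain-is-bad} would fail. The other point worth isolating as a sub-claim is that performing a Kempe change within $K'$ preserves the whole family of $\{a,b\}$-Kempe chains, which is precisely what allows $\cB$ to be read as a set of chains ``under $\delta'$'' as the definition of faithful requires. Odd-hole-freeness of $G$ is not used in this lemma; it enters only in \cref{lem:induction}.
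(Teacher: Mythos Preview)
Your proof is correct and follows essentially the same approach as the paper: the same case split on whether $v\in K$, the same identification of $\cB$ as the components of $K\setminus v$ other than $K'$, and the same verification of \ref{item:one-away} and \ref{item:bad-chain-is-bad}. If anything you are slightly more careful than the paper in spelling out why the $K_i$ remain $\{a,b\}$-Kempe chains under $\delta'$ (not just $\gamma'$) and why the distinguished neighbour keeps colour $a$ after the swap on $K'$; the paper leaves these implicit. Your closing remarks about the asymmetry of \ref{item:bad-chain-is-bad} in $a,b$ and the irrelevance of odd-hole-freeness here are also accurate.
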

\begin{proof}
  If $K'$ is also a Kempe chain in $G$ under $\gamma$, i.e.\ $K=K'$, then $\delta = \perform{\gamma}{K'}$ and its restriction to $G'$ is $\delta'$.
  In particular, $\delta$ is faithful to $\delta'$. Hence, we can assume that $K$ contains $v$. 
  Up to relabelling colours, assume that $\gamma(v) = 0$ and the other colour used by $K$ is $1$. 
  Let $\cA$ be the set of connected components of $K \setminus v$ (see \cref{fig:basea}). Note that $\cA$ is the set of all $\{0,1\}$-Kempe chains of $\gamma'$ that contain a neighbour of $v$ coloured 1.
  By performing $K$ in $G$ under $\gamma$, we obtain a colouring $\delta$ whose restriction to $G'$ differs with $\delta'$ on the chains of $\cB \coloneq \cA \setminus \{K'\}$ (see \cref{fig:baseb,fig:basec}), so \ref{item:one-away} holds. By construction, every Kempe chain in $\cB$ contains a neighbour of $v$ coloured $1$ in $\delta'$, so \ref{item:bad-chain-is-bad} also holds.
\end{proof}

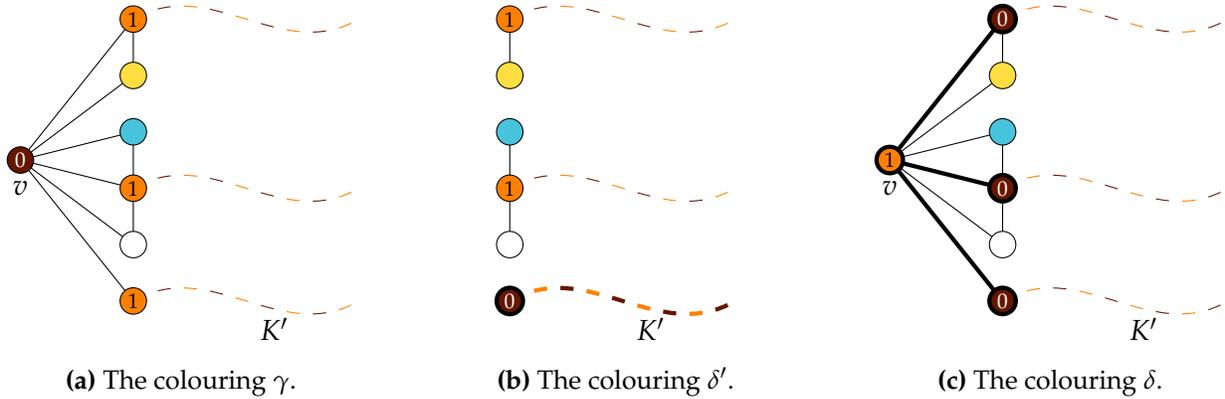
\begin{figure}[h!]
    \begin{subfigure}[t]{.3\textwidth}
        \centering
        \begin{tikzpicture}[scale=1.5]
        \coordinate (v) at (0,0);
        \foreach \x in {1,...,6}{
            \coordinate (a\x) at (1,-1.75+.5*\x);
        }

        \draw (a2) -- (a3) -- (a4) (a5) -- (a6);
        \foreach \x in {1,...,6}{
            \draw (v) -- (a\x);}

        \node[below=4 pt of v] {$v$};       

        \draw[dashone, \colord, postaction={draw=\colorc, dash phase= \dash}] 
        (a1) .. controls ++(30:.8cm) and ++(210:.8cm) .. (3,-1.25);
        \draw[dashone, \colord, postaction={draw=\colorc, dash phase= \dash}] 
        (a3) .. controls ++(30:.8cm) and ++(210:.8cm) .. (3,-.25);
        \draw[dashone, \colord, postaction={draw=\colorc, dash phase= \dash}] 
        (a6) .. controls ++(30:.8cm) and ++(210:.8cm) .. (3,1.25);
        \node at (2.25,-1.5) {$K'$};
  
        \node[circle,draw=black,inner sep = 1pt, minimum size = 2*\ray,fill=\colorc]  at (v) {\scriptsize \textcolor{white} 0};
        \node[circle,draw=black,inner sep = 1pt, minimum size = 2*\ray,fill=\colord]  at (a1) {\scriptsize 1};
        \node[circle,draw=black,inner sep = 1pt, minimum size = 2*\ray,fill=\colorf]  at (a2) {};
        \node[circle,draw=black,inner sep = 1pt, minimum size = 2*\ray,fill=\colord]  at (a3) {\scriptsize 1};
        \node[circle,draw=black,inner sep = 1pt, minimum size = 2*\ray,fill=\colorb]  at (a4) {};
        \node[circle,draw=black,inner sep = 1pt, minimum size = 2*\ray,fill=\colora]  at (a5) {};
        \node[circle,draw=black,inner sep = 1pt, minimum size = 2*\ray,fill=\colord]  at (a6) {\scriptsize 1};
        \end{tikzpicture}
        \caption{The colouring $\gamma$.}\label{fig:basea}
    \end{subfigure}
    \hfill
   \begin{subfigure}[t]{.3\textwidth}
        \centering
        \begin{tikzpicture}[scale=1.5]
        \foreach \x in {1,...,6}{
            \coordinate (a\x) at (1,-1.75+.5*\x);
        }

        \draw (a2) -- (a3) -- (a4) (a5) -- (a6);

        \draw[ultra thick, dashone, \colorc, postaction={draw=\colord, dash phase= \dash}] 
        (a1) .. controls ++(30:.8cm) and ++(210:.8cm) .. (3,-1.25);
        \draw[dashone, \colord, postaction={draw=\colorc, dash phase= \dash}] 
        (a3) .. controls ++(30:.8cm) and ++(210:.8cm) .. (3,-.25);
        \draw[dashone, \colord, postaction={draw=\colorc, dash phase= \dash}] 
        (a6) .. controls ++(30:.8cm) and ++(210:.8cm) .. (3,1.25);
        \node at (2.25,-1.5) {$K'$};
  
        \node[ultra thick,circle,draw=black,inner sep = 1pt, minimum size = 2*\ray,fill=\colorc]  at (a1) {\scriptsize \textcolor{white} 0};
        \node[circle,draw=black,inner sep = 1pt, minimum size = 2*\ray,fill=\colorf]  at (a2) {};
        \node[circle,draw=black,inner sep = 1pt, minimum size = 2*\ray,fill=\colord]  at (a3) {\scriptsize 1};
        \node[circle,draw=black,inner sep = 1pt, minimum size = 2*\ray,fill=\colorb]  at (a4) {};
        \node[circle,draw=black,inner sep = 1pt, minimum size = 2*\ray,fill=\colora]  at (a5) {};
        \node[circle,draw=black,inner sep = 1pt, minimum size = 2*\ray,fill=\colord]  at (a6) {\scriptsize 1};
        \end{tikzpicture}
        \caption{The colouring $\delta'$.}\label{fig:baseb}
    \end{subfigure}   
    \hfill
    \begin{subfigure}[t]{.3\textwidth}
        \centering        
        \begin{tikzpicture}[scale=1.5]
        \coordinate (v) at (0,0);
        \foreach \x in {1,...,6}{
            \coordinate (a\x) at (1,-1.75+.5*\x);
        }

        \draw (a2) -- (a3) -- (a4) (a5) -- (a6);
        \foreach \x in {1,3,6}{
            \draw[ultra thick] (v) -- (a\x);}
        \foreach \x in {2,4,5}{
            \draw (v) -- (a\x);}

        \node[below=4 pt of v] {$v$};       

        \draw[dashone, \colorc, postaction={draw=\colord, dash phase= \dash}] 
        (a1) .. controls ++(30:.8cm) and ++(210:.8cm) .. (3,-1.25);
        \draw[dashone, \colorc, postaction={draw=\colord, dash phase= \dash}] 
        (a3) .. controls ++(30:.8cm) and ++(210:.8cm) .. (3,-.25);
        \draw[dashone, \colorc, postaction={draw=\colord, dash phase= \dash}] 
        (a6) .. controls ++(30:.8cm) and ++(210:.8cm) .. (3,1.25);
        \node at (2.25,-1.5) {$K'$};
  
        \node[ultra thick, circle,draw=black,inner sep = 1pt, minimum size = 2*\ray,fill=\colord]  at (v) {\scriptsize 1};
        \node[ultra thick, circle,draw=black,inner sep = 1pt, minimum size = 2*\ray,fill=\colorc]  at (a1) {\scriptsize \textcolor{white} 0};
        \node[circle,draw=black,inner sep = 1pt, minimum size = 2*\ray,fill=\colorf]  at (a2) {};
        \node[ultra thick, circle,draw=black,inner sep = 1pt, minimum size = 2*\ray,fill=\colorc]  at (a3) {\scriptsize \textcolor{white} 0};
        \node[circle,draw=black,inner sep = 1pt, minimum size = 2*\ray,fill=\colorb]  at (a4) {};
        \node[circle,draw=black,inner sep = 1pt, minimum size = 2*\ray,fill=\colora]  at (a5) {};
        \node[ultra thick, circle,draw=black,inner sep = 1pt, minimum size = 2*\ray,fill=\colorc]  at (a6) {\scriptsize \textcolor{white} 0};
        \end{tikzpicture}
        \caption{The colouring $\delta$.}\label{fig:basec}
    \end{subfigure}
    \caption{The different colourings of \cref{lem:base}. The set of Kempe chains in $\cA$ are represented by dashed lines. The Kempe chains on which $\delta$ and $\delta'$ differ with $\gamma$ are thickened.}\label{fig:base}
\end{figure}

In particular, by defining~$\alpha_0 = \alpha$,~\cref{lem:base} builds a colouring $\alpha_1$ Kempe equivalent to $\alpha_0$ and faithful to $\alpha_1'$.

\subsection{Defining \texorpdfstring{$\alpha_i$}{alpha(i)} when \texorpdfstring{$\alpha_{i-1}$}{alpha(i-1)} and \texorpdfstring{$\alpha'_{i-1}$}{alpha' (i-1)} differ on $G'$}\label{sec:faithful}

In the case where $\alpha_{i-1}$ is faithful to $\alpha_{i-1}'$ but they differ on $G'$, the main difficulty arises when the Kempe change $K'$ we need to perform would affect vertices of $\cB$.
Under this setting, we first
perform a series of Kempe changes to obtain a colouring which agrees with $\alpha_{i-1}'$ on $\cB$, possibly creating a different set of bad Kempe chains in the process,
before performing $K'$ to obtain a colouring $\alpha_i$ faithful to $\alpha_i'$. This procedure is handled by the following lemma:



\begin{lemma}\label{lem:induction}
    Let $G$ be an odd-hole free graph, $G'\coloneqq G\setminus v$ for some $v\in V(G)$ and $k>f(G)$.
    Let $\gamma_1'$, $\gamma_2'$ be two colourings of $G'$ that differ by one Kempe change $K'$ and $\gamma_1$ be a colouring of $G$ faithful to $\gamma_1'$. Then there is a colouring $\gamma_2$ of $G$ that is faithful to $\gamma_2'$ and Kempe equivalent to $\gamma_1$.
\end{lemma}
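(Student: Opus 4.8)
The plan is to follow the Kempe change $K'$ through the ``faithful'' certificate. Write $\cB,a,b$ for the bad Kempe chains and the two colours witnessing that $\gamma_1$ is faithful to $\gamma_1'$ (so $\gamma_1|_{G'}$ is obtained from $\gamma_1'$ by interchanging $a$ and $b$ on every chain of $\cB$), and let $c,d$ be the colours used by $K'$. Two reductions come first. If $\cB=\emptyset$ then $\gamma_1|_{G'}=\gamma_1'$ and \cref{lem:base} already produces the required $\gamma_2$, so we may assume $\cB\neq\emptyset$. Since every chain of $\cB$ contains a neighbour of $v$ coloured $a$ in $\gamma_1'$, hence coloured $b$ in $\gamma_1$, we have $\gamma_1(v)\neq b$; and if moreover $\gamma_1(v)\notin\{a,b\}$, then (one checks) the chains of $\cB$ are pairwise disjoint genuine $\{a,b\}$-Kempe chains of $\gamma_1$ in $G$, so performing all of them reaches a colouring agreeing with $\gamma_1'$ on all of $G'$, again reducing to \cref{lem:base}. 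Hence we may assume $\gamma_1(v)=a$ --- an invariant that is maintained throughout, exactly as in the bookkeeping of \cref{lem:base}.

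I would then split according to $\{c,d\}\cap\{a,b\}$. If $\{c,d\}\cap\{a,b\}=\emptyset$, the interchanges defining $\gamma_1|_{G'}$ leave colours $c,d$ untouched, so $K'$ is also a $\{c,d\}$-Kempe chain of $\gamma_1|_{G'}$; since $\gamma_1(v)=a\notin\{c,d\}$ its extension to $\gamma_1$ does not contain $v$ and equals $K'$, a genuine Kempe chain of $G$, and $\perform{\gamma_1}{K'}$ is checked against \ref{item:one-away}--\ref{item:bad-chain-is-bad} to be faithful to $\gamma_2'$ with the same bad chains $\cB$. If $\{c,d\}=\{a,b\}$, then $K'$ is an $\{a,b\}$-Kempe chain of $\gamma_1'$, and either $K'\in\cB$, in which case $\gamma_2:=\gamma_1$ is already faithful to $\gamma_2'$ with bad chains $\cB\setminus\{K'\}$; or $K'\notin\cB$, and then $K'$ is either disjoint from the $\{a,b\}$-Kempe chain $K^\star$ of $\gamma_1$ through $v$ (perform $K'$ in $G$ and keep bad chains $\cB$) or is one of the ``innocent'' $\{a,b\}$-components of $K^\star\setminus v$ (perform $K^\star$: this recolours $v$ from $a$ to $b$, turns $\cB$ into a subset of the remaining innocent components, and yields a colouring faithful to $\gamma_2'$). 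Each of these is routine verification of \ref{item:one-away}--\ref{item:bad-chain-is-bad}.

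The remaining case --- $|\{c,d\}\cap\{a,b\}|=1$, say $K'$ is an $\{a,d\}$-Kempe chain with $d\notin\{a,b\}$ --- is the heart of the argument: here performing (a version of) $K'$ in $G$ threatens both to engulf $v$ (coloured $a$) and to collide with the bad chains along $a$-coloured vertices, and the family of ``bad'' chains one would obtain uses two different colour pairs, which is forbidden. The plan is a \emph{detangling} step: a preliminary sequence of Kempe changes carrying $\gamma_1$ to a colouring $\hat\gamma$ still faithful to $\gamma_1'$ but whose witnessing colour pair is disjoint from $\{c,d\}$ (so, by the invariant, $\hat\gamma(v)\notin\{c,d\}$), after which $K'$ no longer interferes and we are back in the first case. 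The ingredients are (i) the ``$\{a,b\}$-chain through $v$'' move of the previous paragraph, which toggles $\gamma(v)$ between $a$ and $b$ and swaps the bad chains for the innocent $\{a,b\}$-components at $v$, and (ii) genuinely recolouring $v$ to a fresh colour outside $\{a,b,d\}$. Step (ii) is the only place the hypotheses are used: from $k>f(G)$ one has $\deg(v)\le 2(k-1)-\omega(v)$, and a classical argument on bichromatic chains at a vertex of an odd-hole-free graph (in the spirit of the vertex-recolouring lemmas of \cite{aravind2011Bounding,legrand-duchesne2024Exploring}) then recolours $v$ by Kempe changes to such a colour while keeping the colouring faithful to $\gamma_1'$ and the resulting bad chains disjoint from $K'$.

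Once $\hat\gamma$ is in hand, \cref{lem:base}-style bookkeeping on the extension of $K'$ to $\hat\gamma$ finishes the hard case as in the first easy case. I expect step (ii) --- recolouring $v$ when $\deg(v)$ may be far larger than $k$, and simultaneously forcing the new bad chains off $K'$ --- to be the main obstacle; everything else is careful but mechanical checking of \ref{item:one-away} and \ref{item:bad-chain-is-bad} along each Kempe change.
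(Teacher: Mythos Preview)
Your plan is correct and is essentially the paper's approach. The reductions to $\gamma_1(v)=a$ and the disjoint-colour case coincide with the paper's Claims~\ref{cl:case2} and~\ref{cl:case1}; your direct treatment of $\{c,d\}=\{a,b\}$ is a small simplification the paper simply absorbs into the general case (there are then only two problematic colours). In the hard case your ``detangling'' plan is exactly what the paper does.

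The only place where your proposal is thin is step (ii), which you rightly flag as the crux. The paper's realisation is short and concrete, and does not need ingredient (i) at all: after two further easy reductions (if $v$ misses a colour, or if $v$ has a unique neighbour coloured $b$, one finishes immediately), every colour appears on $N(v)$ and $b$ appears at least twice. Writing $S$ for the neighbours of $v$ whose colour is unique in $\gamma_1(N(v))$, the count $\deg(v)\ge 2(k-1)-|S|$ together with $k>f(G)$ gives $|S|\ge\omega(v)+1$; discarding the at most one colour of $\gamma_1'(K')\setminus\{a,b\}$ leaves a set $T\subseteq S$ of size at least $\omega(v)$ whose colours avoid $\{a,b\}\cup\gamma_1'(K')$. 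Then $T\cup\{v\}$ contains a non-edge $uw$ with $u,w\in T$. If $L$ is the $\{\gamma_1(u),\gamma_1(w)\}$-Kempe chain through $u$, then $w\notin L$: otherwise a shortest $u$--$w$ path in $L$ together with $v$ is an odd cycle with no chord through $v$ (since $u,w$ are the \emph{unique} neighbours of $v$ in their colours), forcing an odd hole. Performing $L$, recolouring $v$ to $\gamma_1(u)$, and then performing all chains of $\cB$ (now genuine Kempe chains of $G$, as $v$ is no longer coloured $a$) yields your $\hat\gamma$: faithful to $\gamma_1'$ with the single bad chain $L$, whose two colours are disjoint from those of $K'$ by construction. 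One then finishes exactly as in your disjoint-colour case.
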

\begin{proof}
Up to relabelling the colours, we can assume that the set $\cB$ of bad Kempe chains under $\gamma_1'$ are using the colours $0$ and $1$ and that every Kempe chain in $\cB$ contains a neighbour of $v$ coloured $1$ in $\gamma_1'$.

\begin{claim}\label{cl:case2}
    If $\gamma_1(v) \neq 0$ then the conclusion of \cref{lem:induction} holds.
\end{claim}
\begin{poc}
    Assume that $\gamma_1(v) \neq 0$. Then the bad Kempe chains in $\cB$ are also Kempe chains of $G$ under $\gamma_1$. So we can successively perform all the Kempe chains in $\cB$ under $\gamma_1$ to obtain a colouring $\delta$ such that $\delta|_{G'} = \gamma_1'$.
    By~\cref{lem:base}, $\gamma_2 = \perform{\delta}{K'}$ is faithful to $\gamma_2'$ and Kempe equivalent to $\gamma_1$.
\end{poc}
\begin{claim}\label{cl:case1}
    If $K'$ does not use the colours of the bad Kempe chains, then the conclusion of \cref{lem:induction} holds.
\end{claim}
\begin{poc}
    Assume that $K'$ uses neither $0$ nor $1$.
    By \cref{cl:case2}, we can assume that $\gamma_1(v) = 0$. 
    Then, $K'$ is also a Kempe chain under $\gamma_1$ and it does not intersect with the Kempe chains in $\cB$.
    So $\gamma_2 \coloneq \perform{\gamma_1}{K'}$ satisfies the conditions \ref{item:one-away} and \ref{item:bad-chain-is-bad}, by taking identical $a$, $b$ and $\cB$.
\end{poc}

Call the colours of $\{0,1\} \cup \gamma'_1(K')$ the \emph{problematic colours}.
By \cref{cl:case2} and \cref{cl:case1} we may assume from now on that $\gamma_1(v)=0$ and that there are at most $3$ problematic colours.

If $v$ misses colour $c$ in $\gamma_1$, that is $c \notin  \gamma_1(N[v])$, then we can recolour $v$ into $c$ without changing $\gamma_1|_{G'}$ and conclude using \cref{cl:case2}. Thus, we can assume that the  neighbourhood of $v$ contains all colours except $0$ in $\gamma_1$,
i.e. ~$|\gamma_1(N(v))| = k - 1$.

Likewise, if $v$ has only one neighbour $w$ coloured $1$ in $\gamma_1$ then 
 by \ref{item:one-away}, $\gamma_1|_{G'}$ and $\gamma_1'$ differ on a unique bad Kempe chain $B' \in \cB$ (see \cref{fig:case3aeasy}). Let $B$ be the extension of $B'$ to $\gamma_1$ and $\delta = \perform{\gamma_1}{B}$. Then $\delta|_{G'} = \gamma_1'$ (see \cref{fig:case3beasy}) and $\gamma_2$ is given by \cref{lem:base}. Thus we can also assume that $v$ has at least two neighbours coloured $1$.
\begin{figure}[h!]
    \begin{subfigure}[t]{.45\textwidth}
    \centering
    \begin{tikzpicture}[scale=1.5]
        \coordinate (v) at (0,0);
        \foreach \x in {1,...,6}{
            \coordinate (a\x) at (1,-1.75+.5*\x);
        }

        \draw (a2) -- (a3) -- (a4) (a5) -- (a6);
        \foreach \x in {1,...,6}{
            \draw (v) -- (a\x);}

        \node[below=4 pt of v] {$v$};

        \draw[dashone, \colord, postaction={draw=\colorc, dash phase= \dash}] 
        (a3) .. controls ++(30:.8cm) and ++(210:.8cm) .. (3,-.25);
        \draw[dashone, \colorc, postaction={draw=\colorb, dash phase= \dash}] 
        (2,.5) .. controls ++(-60:.8cm) and ++(120:.8cm) .. (2,-1.5);
        \node at (2.25,-1.25) {$K'$};
  
        \node[circle,draw=black,inner sep = 1pt, minimum size = 2*\ray,fill=\colorc]  at (v) {\scriptsize \textcolor{white} 0};
        \node[circle,draw=black,inner sep = 1pt, minimum size = 2*\ray,fill=\colorb]  at (a1) {};
        \node[circle,draw=black,inner sep = 1pt, minimum size = 2*\ray,fill=\colorf]  at (a2) {};
        \node[circle,draw=black,inner sep = 1pt, minimum size = 2*\ray,fill=\colord]  at (a3) {\scriptsize 1};
        \node[circle,draw=black,inner sep = 1pt, minimum size = 2*\ray,fill=\colore]  at (a4) {};
        \node[circle,draw=black,inner sep = 1pt, minimum size = 2*\ray,fill=\colora]  at (a5) {};
        \node[circle,draw=black,inner sep = 1pt, minimum size = 2*\ray,fill=\colorb]  at (a6) {};
  
    \node[circle,draw=black,inner sep = 1pt, minimum size = \ray,fill=\colord]  at (2.07,-.27) {};
    
    \end{tikzpicture}
    \caption{The colouring $\gamma_1$. }\label{fig:case3aeasy}
    \end{subfigure}
    \hfill
    \begin{subfigure}[t]{.45\textwidth}
        \centering
        \begin{tikzpicture}[scale=1.5]
            \coordinate (v) at (0,0);
            \foreach \x in {1,...,6}{
                \coordinate (a\x) at (1,-1.75+.5*\x);}

            \draw (a2) -- (a3) -- (a4) (a5) -- (a6);
            \foreach \x in {1,...,6}{
                \draw (v) -- (a\x);}

            \node[below=4 pt of v] {$v$};
            
            \draw[dashone, \colorc, postaction={draw=\colord, dash phase= \dash}] 
            (a3) .. controls ++(30:.8cm) and ++(210:.8cm) .. (3,-.25);
            \draw[dashone, \colorc, postaction={draw=\colorb, dash phase= \dash}] 
            (2,.5) .. controls ++(-60:.8cm) and ++(120:.8cm) .. (2,-1.5);
            \node at (2.25,-1.25) {$K'$};
  
            \node[circle,draw=black,inner sep = 1pt, minimum size = 2*\ray,fill=\colord]  at (v) {\scriptsize 1};
            \node[circle,draw=black,inner sep = 1pt, minimum size = 2*\ray,fill=\colorb]  at (a1) {};
            \node[circle,draw=black,inner sep = 1pt, minimum size = 2*\ray,fill=\colorf]  at (a2) {};
            \node[circle,draw=black,inner sep = 1pt, minimum size = 2*\ray,fill=\colorc]  at (a3) {\scriptsize \textcolor{white} 0};
            \node[circle,draw=black,inner sep = 1pt, minimum size = 2*\ray,fill=\colore]  at (a4) {};
            \node[circle,draw=black,inner sep = 1pt, minimum size = 2*\ray,fill=\colora]  at (a5) {};
            \node[circle,draw=black,inner sep = 1pt, minimum size = 2*\ray,fill=\colorb]  at (a6) {};

    \node[circle,draw=black,inner sep = 1pt, minimum size = \ray,fill=\colorc]  at (2.07,-.27) {};

        \end{tikzpicture}
        \caption{The colouring $\delta$ extends $\gamma_1'$.}\label{fig:case3beasy}
    \end{subfigure}   
    \caption{Recolouring sequence when $v$ has only one neighbour coloured $1$. The smaller vertex represents any vertex in the intersection of $B'$ and $K'$.}\label{fig:case3easy}
\end{figure}
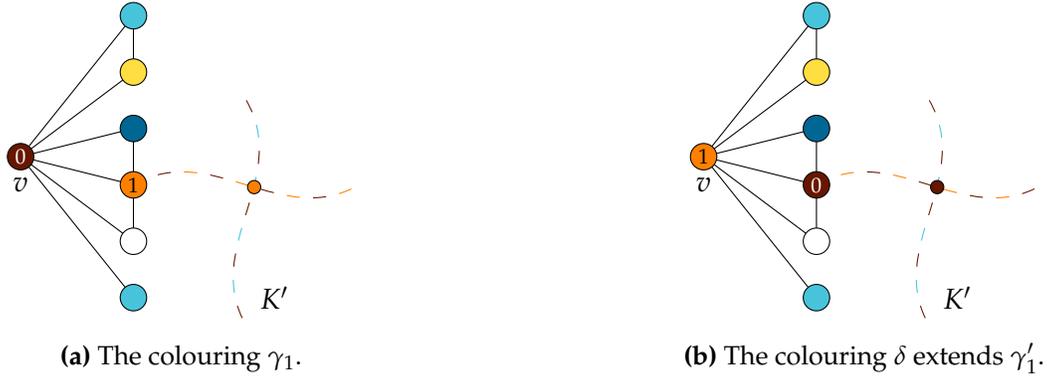

In order to recolour the vertex~$v$ and reduce to \cref{cl:case2}, we find a Kempe chain~$L$ of $\gamma_1$ such that $v$ misses a problematic colour $c$ in $\perform{\gamma_1}{L}$. We then recolour~$v$ with this $c$, perform the Kempe changes in $\cB$ and~$K'$, and maintain~$L$ as the
next bad Kempe chain (i.e., define~$\cB$ as~$\{L\}$).
We now formalise this. 
Let $S$ be the set of neighbours of $v$ whose colour appears only once in $\gamma_1(N(v))$. Recall that $v$ has at least two neighbours coloured $1$ in $\gamma_1$, so these vertices do not belong to $S$. We have
\begin{align*}
  \deg(v) = |N(v)| & \geq 2 (k - 1 - |S|) + |S| \\
         &\ge 2 \left\lceil\frac{\omega(v)+\deg(v) + 1}{2}\right\rceil - |S| \\
         &> \omega(v) + \deg(v) - |S|,
\end{align*}
which implies that~$|S| \geq \omega(v) + 1$.
Let $T$ be the vertices of $S$ whose colour under $\gamma_1$ avoids the problematic colours.
Since there are at most three problematic colours and $S$ contains no vertices coloured $1$ or $0$, 
we have $|T| \ge \omega(v)$.
The vertices of $T$ induce a non-edge $uw$, as they would otherwise induce a clique of size at least $\omega(v)+1$ together with $v$.
Say $u$ and $w$ are respectively coloured $2$ and $3$ in $\gamma_1$ (see~\cref{fig:case3a}).

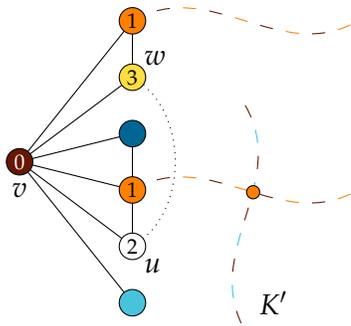
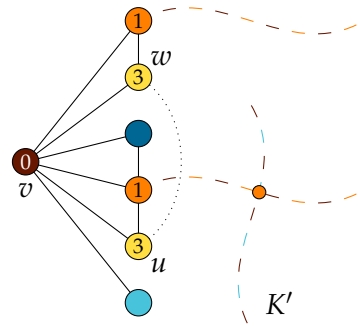
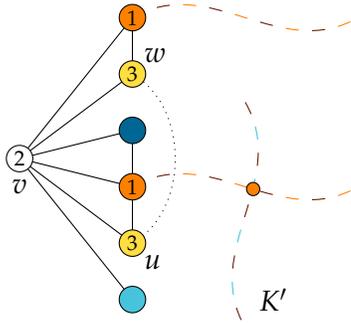
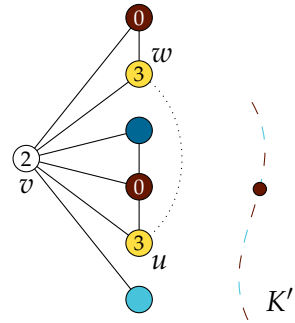
\begin{figure}[h!]
    \begin{subfigure}[t]{.45\textwidth}
    \centering
    \begin{tikzpicture}[scale=1.5]
        \coordinate (v) at (0,0);
        \foreach \x in {1,...,6}{
            \coordinate (a\x) at (1,-1.75+.5*\x);
        }

        \draw (a2) -- (a3) -- (a4) (a5) -- (a6);
        \foreach \x in {1,...,6}{
            \draw (v) -- (a\x);}

        \node[below=4 pt of v] {$v$};
        \node[below right=1 pt of a2] {$u$};
        \node[above right=1 pt of a5] {$w$};
        \draw (a2) edge[dotted, bend right=60] (a5);

        \draw[dashone, \colord, postaction={draw=\colorc, dash phase= \dash}] 
        (a3) .. controls ++(30:.8cm) and ++(210:.8cm) .. (3,-.25);
        \draw[dashone, \colord, postaction={draw=\colorc, dash phase= \dash}] 
        (a6) .. controls ++(30:.8cm) and ++(210:.8cm) .. (3,1.25);
        \draw[dashone, \colorc, postaction={draw=\colorb, dash phase= \dash}] 
        (2,.5) .. controls ++(-60:.8cm) and ++(120:.8cm) .. (2,-1.5);
        \node at (2.25,-1.25) {$K'$};
  
        \node[circle,draw=black,inner sep = 1pt, minimum size = 2*\ray,fill=\colorc]  at (v) {\scriptsize \textcolor{white} 0};
        \node[circle,draw=black,inner sep = 1pt, minimum size = 2*\ray,fill=\colorb]  at (a1) {};
        \node[circle,draw=black,inner sep = 1pt, minimum size = 2*\ray,fill=\colorf]  at (a2) {\scriptsize 2};
        \node[circle,draw=black,inner sep = 1pt, minimum size = 2*\ray,fill=\colord]  at (a3) {\scriptsize 1};
        \node[circle,draw=black,inner sep = 1pt, minimum size = 2*\ray,fill=\colore]  at (a4) {};
        \node[circle,draw=black,inner sep = 1pt, minimum size = 2*\ray,fill=\colora]  at (a5) {\scriptsize 3};
        \node[circle,draw=black,inner sep = 1pt, minimum size = 2*\ray,fill=\colord]  at (a6) {\scriptsize 1};

        \node[circle,draw=black,inner sep = 1pt, minimum size = \ray,fill=\colord]  at (2.07,-.27) {};

    \end{tikzpicture}
    \caption{The colouring $\gamma_1$. Vertices $u$ and $w$ are non adjacent.}\label{fig:case3a}
    \end{subfigure}
    \hfill
    \begin{subfigure}[t]{.45\textwidth}
        \centering
        \begin{tikzpicture}[scale=1.5]
            \coordinate (v) at (0,0);
            \foreach \x in {1,...,6}{
                \coordinate (a\x) at (1,-1.75+.5*\x);}

            \draw (a2) -- (a3) -- (a4) (a5) -- (a6);
            \foreach \x in {1,...,6}{
                \draw (v) -- (a\x);}

            \node[below=4 pt of v] {$v$};
            \node[below right=1 pt of a2] {$u$};
            \node[above right=1 pt of a5] {$w$};
            \draw (a2) edge[dotted, bend right=60] (a5);

            \draw[dashone, \colord, postaction={draw=\colorc, dash phase= \dash}] 
            (a3) .. controls ++(30:.8cm) and ++(210:.8cm) .. (3,-.25);
            \draw[dashone, \colord, postaction={draw=\colorc, dash phase= \dash}] 
            (a6) .. controls ++(30:.8cm) and ++(210:.8cm) .. (3,1.25);
            \draw[dashone, \colorc, postaction={draw=\colorb, dash phase= \dash}] 
            (2,.5) .. controls ++(-60:.8cm) and ++(120:.8cm) .. (2,-1.5);
            \node at (2.25,-1.25) {$K'$};
  
            \node[circle,draw=black,inner sep = 1pt, minimum size = 2*\ray,fill=\colorc]  at (v) {\scriptsize \textcolor{white} 0};
            \node[circle,draw=black,inner sep = 1pt, minimum size = 2*\ray,fill=\colorb]  at (a1) {};
            \node[circle,draw=black,inner sep = 1pt, minimum size = 2*\ray,fill=\colora]  at (a2) {\scriptsize 3};
            \node[circle,draw=black,inner sep = 1pt, minimum size = 2*\ray,fill=\colord]  at (a3) {\scriptsize 1};
            \node[circle,draw=black,inner sep = 1pt, minimum size = 2*\ray,fill=\colore]  at (a4) {};
            \node[circle,draw=black,inner sep = 1pt, minimum size = 2*\ray,fill=\colora]  at (a5) {\scriptsize 3};
            \node[circle,draw=black,inner sep = 1pt, minimum size = 2*\ray,fill=\colord]  at (a6) {\scriptsize 1};

            \node[circle,draw=black,inner sep = 1pt, minimum size = \ray,fill=\colord]  at (2.07,-.27) {};

        \end{tikzpicture}
        \caption{The colouring $\delta$ obtained by performing the $\{2,3\}$-Kempe chain containing $u$. Vertex $v$ now misses colour 2.}\label{fig:case3b}
    \end{subfigure}   
    
    \begin{subfigure}[t]{.45\textwidth}
        \centering        
        \begin{tikzpicture}[scale=1.5]
            \coordinate (v) at (0,0);
            \foreach \x in {1,...,6}{
                \coordinate (a\x) at (1,-1.75+.5*\x);}

            \draw (a2) -- (a3) -- (a4) (a5) -- (a6);
            \foreach \x in {1,...,6}{
            \draw (v) -- (a\x);}

            \node[below=4 pt of v] {$v$};
            \node[below right=1 pt of a2] {$u$};
            \node[above right=1 pt of a5] {$w$};
            \draw (a2) edge[dotted, bend right=60] (a5);

            \draw[dashone, \colord, postaction={draw=\colorc, dash phase= \dash}] 
            (a3) .. controls ++(30:.8cm) and ++(210:.8cm) .. (3,-.25);
            \draw[dashone, \colord, postaction={draw=\colorc, dash phase= \dash}] 
            (a6) .. controls ++(30:.8cm) and ++(210:.8cm) .. (3,1.25);
            \draw[dashone, \colorc, postaction={draw=\colorb, dash phase= \dash}] 
            (2,.5) .. controls ++(-60:.8cm) and ++(120:.8cm) .. (2,-1.5);
            \node at (2.25,-1.25) {$K'$};
  
            \node[circle,draw=black,inner sep = 1pt, minimum size = 2*\ray,fill=\colorf]  at (v) {\scriptsize 2};
            \node[circle,draw=black,inner sep = 1pt, minimum size = 2*\ray,fill=\colorb]  at (a1) {};
            \node[circle,draw=black,inner sep = 1pt, minimum size = 2*\ray,fill=\colora]  at (a2) {\scriptsize 3};
            \node[circle,draw=black,inner sep = 1pt, minimum size = 2*\ray,fill=\colord]  at (a3) {\scriptsize 1};
            \node[circle,draw=black,inner sep = 1pt, minimum size = 2*\ray,fill=\colore]  at (a4) {};
            \node[circle,draw=black,inner sep = 1pt, minimum size = 2*\ray,fill=\colora]  at (a5) {\scriptsize 3};
            \node[circle,draw=black,inner sep = 1pt, minimum size = 2*\ray,fill=\colord]  at (a6) {\scriptsize 1};
            
            \node[circle,draw=black,inner sep = 1pt, minimum size = \ray,fill=\colord]  at (2.07,-.27) {};
        \end{tikzpicture}
        \caption{The colouring $\tau$ obtained by recolouring $v$ with $2$.}\label{fig:case3c}
    \end{subfigure}
    \hfill
    \begin{subfigure}[t]{.45\textwidth}
        \centering        
        \begin{tikzpicture}[scale=1.5]
            \coordinate (v) at (0,0);
            \foreach \x in {1,...,6}{
            \coordinate (a\x) at (1,-1.75+.5*\x);}

            \draw (a2) -- (a3) -- (a4) (a5) -- (a6);
            \foreach \x in {1,...,6}{
            \draw (v) -- (a\x);}

            \node[below=4 pt of v] {$v$};
            \node[below right=1 pt of a2] {$u$};
            \node[above right=1 pt of a5] {$w$};
            \draw (a2) edge[dotted, bend right=60] (a5);

            \draw[dashone, \colorc, postaction={draw=\colorb, dash phase= \dash}] 
            (2,.5) .. controls ++(-60:.8cm) and ++(120:.8cm) .. (2,-1.5);
            \node at (2.25,-1.25) {$K'$};
  
            \node[circle,draw=black,inner sep = 1pt, minimum size = 2*\ray,fill=\colorf]  at (v) {\scriptsize 2};
            \node[circle,draw=black,inner sep = 1pt, minimum size = 2*\ray,fill=\colorb]  at (a1) {};
            \node[circle,draw=black,inner sep = 1pt, minimum size = 2*\ray,fill=\colora]  at (a2) {\scriptsize 3};
            \node[circle,draw=black,inner sep = 1pt, minimum size = 2*\ray,fill=\colorc]  at (a3) {\scriptsize \textcolor{white} 0};
            \node[circle,draw=black,inner sep = 1pt, minimum size = 2*\ray,fill=\colore]  at (a4) {};
            \node[circle,draw=black,inner sep = 1pt, minimum size = 2*\ray,fill=\colora]  at (a5) {\scriptsize 3};
            \node[circle,draw=black,inner sep = 1pt, minimum size = 2*\ray,fill=\colorc]  at (a6) {\scriptsize \textcolor{white} 0};
            \node[circle,draw=black,inner sep = 1pt, minimum size = \ray,fill=\colorc]  at (2.07,-.27) {};

        \end{tikzpicture}
        \caption{The colouring $\sigma$ obtained by recolouring the bad $\{0,1\}$-Kempe chains.}\label{fig:case3d}
    \end{subfigure}
    \caption{The recolouring sequence when $v$ has at least two neighours coloured $1$.}\label{fig:case3}
\end{figure}
Let $L$ be the $\{2,3\}$-Kempe chain in~$\gamma_1$ that contains $u$ and $\delta$ be the colouring $\perform{\gamma_1}{L}$, as in~\cref{fig:case3b}. Note that $L$ does not contain $w$, as otherwise $L \cup \{v\}$ would contain an induced odd cycle which would also be an induced odd cycle of G, a contradiction. As the colours $2$ and $3$ appear only once in the neighbourhood of $v$ in $\gamma_1$, the vertex $v$ misses the colour $2$ in $\delta$. Let $\sigma$ be the colouring obtained from $\delta$ by recolouring $v$ into $2$ (see \cref{fig:case3c}) and $\tau$ be the colouring obtained from $\sigma$ by performing the Kempe chains in $\cB$ (see \cref{fig:case3d}). The colouring $\sigma|_{G'}$ differs from $\gamma_1'$ only on the $\{2,3\}$-Kempe chain $L$, so $\sigma$ is faithful to $\gamma_1'$. Finally, by definition colours $2$ and $3$ do not appear in $K'$ under $\gamma_1'$, so~\cref{cl:case1} applied to $\sigma$ yields the desired $\gamma_2$.
\end{proof}

\subsection{Frozen colourings of odd-hole-free graphs with low degree and clique number}\label{ssec:odd_hole_tight}

\ref{thm:main_i} implies $\eps^\star(\{\text{odd-hole-free graphs}\}) \ge 1/2$. We now prove \ref{thm:main_ii}, which implies that $\eta^\star(\{\text{odd-hole-free graphs}\}) \le 1/2$ and thus that $\eps^\star = \eta^\star = 1/2$ for odd-hole-free graphs.

\begin{proof}[Proof of \ref{thm:main_ii}]
    Let $G_k$ be the graph where vertices are indexed by tuples $(u,v)$ with $u \in \{1, 2, 3\}$ and $v \in \{1, \dots, k\}$, and $(u_1,v_1)(u_1,v_2) \in E(G_k)$ 
    if $u_1u_2 \in E(K_3)$ and $v_1v_2 \in E(K_k)$. See \cref{fig:product} for an example representing~$G_5$.
    Alternatively, $G_k$ is the tensor product $K_3\times K_k$ of cliques of sizes $3$ and $k$ respectively, or the complement of the $3\times k$ Rook graph $K_3 \square K_k$. 

    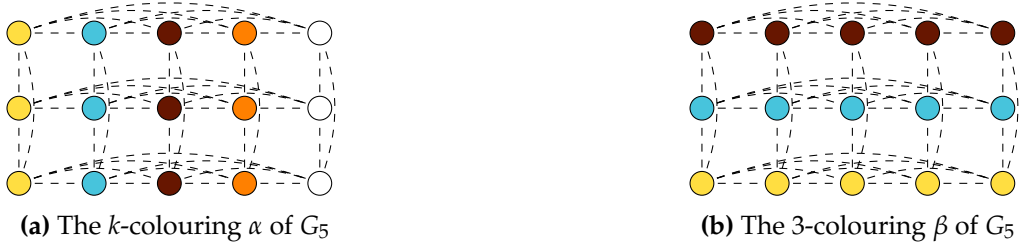
\begin{figure}[h!]
    \begin{subfigure}{.45\textwidth}
    \centering
    \begin{tikzpicture}
  
    \foreach \x in {1,2,3,4,5}{
    \foreach \y in {1,2,3}{
      \coordinate (A\x\y) at (\x,\y);
    }}

    \foreach \x in {1, ..., 5}{
    \foreach \y/\w in {1/2, 2/3}{
      \draw[dashed] (A\x\y) --(A\x\w);
    }
      \path[every node/.style={font=\sffamily\small}]
      (A\x1) edge[bend right=20, dashed] node {} (A\x3);
    }      

    \foreach \x in {1, 2, 3}{
    \foreach \y/\w in {1/2, 2/3, 3/4, 4/5}{
      \draw[dashed] (A\y\x) --(A\w\x);
    }
    \foreach \y/\w in {1/3, 1/4, 1/5, 2/4, 2/5, 3/5}{
      \path[every node/.style={font=\sffamily\small}]
      (A\y\x) edge[bend left=20, dashed] node {} (A\w\x);
    }}

    \foreach \x/\c in {1/\colora,2/\colorb,3/\colorc,4/\colord,5/white}{
    \foreach \y in {1,2,3}{
      \node[draw=black,fill=\c,circle,inner sep=1pt, minimum size = 1.8*\ray] at (A\x\y) {};
    }}
    \end{tikzpicture}
    \caption{The $k$-colouring $\alpha$ of $G_5$}
    \end{subfigure}
    \hfill
    \begin{subfigure}{.45\textwidth}
    \centering
    \begin{tikzpicture}
  
    \foreach \x in {1,2,3,4,5}{
    \foreach \y in {1,2,3}{
      \coordinate (A\x\y) at (\x,\y);
    }}

    \foreach \x in {1, ..., 5}{
    \foreach \y/\w in {1/2, 2/3}{
      \draw[dashed] (A\x\y) --(A\x\w);
    }
      \path[every node/.style={font=\sffamily\small}]
      (A\x1) edge[bend right=20, dashed] node {} (A\x3);
    }      

    \foreach \x in {1, 2, 3}{
    \foreach \y/\w in {1/2, 2/3, 3/4, 4/5}{
      \draw[dashed] (A\y\x) --(A\w\x);
    }
    \foreach \y/\w in {1/3, 1/4, 1/5, 2/4, 2/5, 3/5}{
      \path[every node/.style={font=\sffamily\small}]
      (A\y\x) edge[bend left=20, dashed] node {} (A\w\x);
    }}

    \foreach \y/\c in {1/\colora,2/\colorb,3/\colorc}{
    \foreach \x in {1,2,3,4,5}{
      \node[draw=black,fill=\c,circle,inner sep=1pt, minimum size = 1.8*\ray,] at (A\x\y) {};
    }}
    \end{tikzpicture}
    \caption{The 3-colouring $\beta$ of $G_5$}
    \end{subfigure}
    \caption{Colourings of $G_5$. Non-edges are depicted as dashed lines, while edges are omitted for clarity.}\label{fig:product}
    \end{figure}

    Note that $G_k$ is perfect \cite{ravindra1977Perfect}. Moreover, it is $2(k-1)$-regular and has clique number three. So we have that: $$k = \left\lceil \frac{\omega(G_k)+\Delta(G_k) +1}{2}\right\rceil -1.$$

    Moreover, $G_k$ has a frozen $k$-colouring $\alpha$ with $\alpha((u,v)) \coloneq v$ and a $3$-colouring $\beta$ with $\beta(u,v)) \coloneq u$, which is a fortiori a $k$-colouring, so $G_k$ is not $k$-recolourable. 
\end{proof}

This proves that \cref{thm:reed_odd_hole,thm:main} are tight up to one colour even for perfect graphs, in the sense that the only open case of \cref{conj:odd_hole_reed} is $k = f(G)$.

\section{Frozen colourings of general graphs}\label{sec:tight}
For connected graphs, the random construction of Bonamy, Heinrich, Legrand-Duchesne and Narboni~\cite{MR4660623}, improves the bound in \ref{thm:main_ii} by giving graphs that admit a frozen non-unique $\lceil\eta\omega + (1-\eta)(\Delta+1)\rceil$-colouring for all $\eta > 1/3$. 
This shows that $\eta^\star\leq 1/3$ for connected graphs, and we prove that this bound is optimal.
\tight*
\begin{proof}
  Let $\mathcal{G}$ be the class of all connected graphs. 
  As cliques have only
  one colouring up to colour permutation, $\eta^\star(\mathcal{G}) = \eta^\star(\mathcal{G}')$,
  where $\mathcal{G}' = \mathcal{G} \setminus \{K_t : t \ge 1\}$. Thus, our goal is to prove that $\eta^\star := \eta^\star(\mathcal{G}')\geq1/3$. 

\begin{claim}\label{lem:no_isolated}
    Let $\alpha_1$ be a frozen $k_1$-colouring of a graph $G_1 \in \mathcal{G}'$ with a colour class of size one. 
    Then at least one of the following holds:
    \begin{itemize}
    \item there exists a graph $G_2 \in \mathcal{G}'$ with a frozen $k_2$-colouring and all colour classes of size at least two such that $\eta_2 \le \eta_1$, where $\eta_i$ satisfies $k_i = \eta_i\omega(G_i) + (1-\eta_i)(\Delta(G_i) +1)$ for $i=1,2$,
    \item or $\eta_1 = 1$.
    \end{itemize}
\end{claim}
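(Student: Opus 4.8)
The plan is to exploit the rigidity forced on $G_1$ by a singleton colour class. If $\{x\}$ is a colour class of the frozen colouring $\alpha_1$, then for every other colour class $U'$ the graph $G_1[\{x\}\cup U']$ is connected, and since $U'$ is independent this is possible only if $x$ is adjacent to all of $U'$; as these classes partition $V(G_1)\setminus\{x\}$, the vertex $x$ is universal in $G_1$. Conversely a universal vertex cannot share a colour with a neighbour, so it forms a singleton class, and moreover $\alpha_1$ has no empty colour class (an unused colour together with any class of size at least two would not span a connected subgraph, and if every class had size at most one then $G_1$ would be complete). So, letting $X$ be the union of all size-one colour classes of $\alpha_1$ and $s\coloneqq|X|\ge 1$, the set $X$ is exactly the set of universal vertices of $G_1$; it induces a clique, hence $G_1$ is the join $K_s\vee H$ with $H\coloneqq G_1\setminus X$. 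The restriction $\alpha_2\coloneqq\alpha_1|_H$ then uses exactly $k_2\coloneqq k_1-s$ colours (the $s$ colours on $X$ are distinct and appear only there), each of its colour classes has size at least two, and it is frozen on $H$ because $H[U'\cup U'']=G_1[U'\cup U'']$ for colour classes $U',U''\subseteq V(H)$, which is connected.

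Next I would read off parameters in terms of $H$: since $X$ is a clique joined to everything, $\omega(G_1)=\omega(H)+s$; since each vertex of $X$ is universal, $\Delta(G_1)=|V(H)|+s-1$, so $\Delta(G_1)+1=|V(H)|+s$; and $k_1=k_2+s$. Because $G_1$ is connected and not complete we have $\Delta(G_1)\ge\omega(G_1)$, so $\eta_1$ is well-defined, and substituting these identities makes the $s$'s cancel:
\[
  \eta_1=\frac{\Delta(G_1)+1-k_1}{\Delta(G_1)+1-\omega(G_1)}=\frac{|V(H)|-k_2}{|V(H)|-\omega(H)}.
\]

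I would then split on $k_2$. If $k_2=1$ then $H$ is a single colour class, hence an independent set, with $|V(H)|\ge 2$ because that class has size at least two; so $\omega(H)=1$ and the display gives $\eta_1=1$, the second alternative. If $k_2\ge 2$, I first show $H$ is connected: otherwise, choosing a colour $c$ occurring in some component $H_0$ and any other colour $c'$, the connected subgraph $H[U_c\cup U_{c'}]$ meets $H_0$, hence lies inside it, so $U_{c'}\subseteq V(H_0)$; letting $c'$ vary forces $V(H)=V(H_0)$, a contradiction. Also $H$ is not complete (else $G_1$ would be), so $H\in\mathcal{G}'$, and $(H,\alpha_2)$ is a frozen $k_2$-colouring with every class of size at least two. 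Finally $\eta_2\le\eta_1$: the map $\varphi(D)=\dfrac{D-k_2}{D-\omega(H)}=1-\dfrac{k_2-\omega(H)}{D-\omega(H)}$ is non-decreasing on $(\omega(H),\infty)$ since $\omega(H)\le\chi(H)\le k_2$, and $\omega(H)<\Delta(H)+1\le|V(H)|$ because $H$ is connected and not complete, so $\eta_2=\varphi(\Delta(H)+1)\le\varphi(|V(H)|)=\eta_1$.

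I expect no single hard step, but the care has to go into the structural dichotomy --- that singleton classes are precisely the universal vertices, so that $G_1=K_s\vee H$ and all parameters are transparent --- and into the degenerate cases: ruling out empty colour classes, handling $k_2=1$ (where $H$ is disconnected and we instead deduce $\eta_1=1$), and checking that $H$ is connected, not complete, and satisfies $\Delta(H)\ge\omega(H)$ whenever $k_2\ge 2$, so that $\eta_2$ is defined and the monotonicity of $\varphi$ applies.
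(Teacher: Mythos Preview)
Your proof is correct and follows essentially the same approach as the paper: delete the set $X$ of singleton colour classes (which you correctly identify as the set of universal vertices, giving the join decomposition $G_1=K_s\vee H$), split into the case where at most one large class remains (yielding $\eta_1=1$) versus at least two (yielding $G_2=H\in\mathcal G'$), and then compare $\eta_2$ to $\eta_1$ using that $\omega$ and $k$ drop by exactly $|X|$ while $\Delta(G_1)+1=|V(H)|+s$ drops by at least $|X|$. Your version is in fact more carefully written --- you make the monotonicity argument for $\varphi$ explicit where the paper just asserts $\eta_2\le\eta_1$, and you handle the edge issues (no empty classes, $H$ connected and not complete, $\Delta(H)\ge\omega(H)$) that the paper leaves implicit.
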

\begin{poc}
    Let $X$ be the set of vertices that are uniquely coloured in $\alpha_1$. 
    Let $G_2 =G_1 \setminus X$ and $\alpha_2$ be the $(k_1 - |X|)$-colouring induced by $\alpha_1$ on $G_2$.
    Since $\alpha_1$ is frozen, $X$ induces a clique in $G_1$ and $\alpha_2$ is frozen.
    As $G_1$ is not a clique, there is at least one colour class in $\alpha_1$ that has size at least two.
    If there is only one such colour class, then we have that $k_1 = \omega(G_1) < \Delta(G_1) + 1$, which implies that $\eta_1 = 1$. 
    Thus, we can assume that there are at least two colour classes of size at least two in $\alpha_1$. Since every pair of colour classes induces a connected graph in $G_1$, $G_2$ is connected and $\alpha_1|_{G_2}$ is frozen. 
    And since $G_2$ has a colour class of size at least two, $G_2$ is not a clique and is in $\mathcal{G}'$. 
    
    The vertices of $X$ dominate $G_1$, so $\Delta(G_1) =|G_1|-1$ and $\Delta(G_2) \le \Delta(G_1)-|X|$. On the other hand, $\omega(G_2) = \omega(G_1)- |X|$ and $k_2 = k_1 - |X|$. Hence, from $G_1$ to $G_2$, the number of colours of the frozen colouring and the clique number decreased exactly by $|X|$, while the maximum degree decreased by at least $|X|$, so $\eta_2\le\eta_1$.
\end{poc}

For all $G$ and $k$, denote $\eta^{(k)}(G)$ the infimum over all $\eta$ such that there exists a frozen non-unique $k$-colouring $\alpha$ of $G$, with $k = \eta\omega(G)+(1-\eta)(\Delta(G) +1)$ and such that all colour classes of $\alpha$ have size at least two. Denote also $\eta(G) = \inf \{\eta^{(k)}(G) : k \in \mathbb{N}\}$. By \cref{lem:no_isolated}, we have that $\eta^\star = \inf \{\eta(G) : G \in \mathcal{G}'\}$.

\begin{claim}\label{lem:optimal}
    Let $\alpha$ be a frozen $k$-colouring of $G \in \mathcal{G}'$ with no colour class of size one. Then $\Delta(G)\ge \frac32 (k-1)$.
    As a result, $\eta^{(k)}(G) > 1/3$.
\end{claim}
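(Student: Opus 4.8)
The plan is to first establish the degree bound $\Delta(G)\ge\tfrac32(k-1)$ by applying \cref{lem:avgdeg_frozen} to a \emph{smallest} colour class of $\alpha$, and then to deduce $\eta^{(k)}(G)>1/3$ by solving the defining equation $k=\eta\,\omega(G)+(1-\eta)(\Delta(G)+1)$ for $\eta$ and substituting the degree bound.

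For the first step, let $U$ be a colour class of $\alpha$ of minimum cardinality and write $t:=|U|$ and $n:=|V(G)|$. Since each of the $k$ colour classes has size at least two, we have $2\le t$ and $kt\le n$; in particular $n\ge 2k$ and $t\le n/k$. \cref{lem:avgdeg_frozen} then gives
$$\Delta(G)\ \ge\ \frac{\deg(U)}{|U|}\ \ge\ \frac{n-t+(k-1)(t-1)}{t}\ =\ \frac{n-(k-1)}{t}+(k-2).$$
As $n-(k-1)>0$, this last expression is decreasing in $t$, so plugging in the extremal value $t=n/k$ yields $\Delta(G)\ge 2(k-1)-\tfrac{k(k-1)}{n}$, and then $n\ge 2k$ gives $\Delta(G)\ge 2(k-1)-\tfrac{k-1}{2}=\tfrac32(k-1)$. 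The point of choosing a minimum class is precisely that it guarantees $t\le n/k$, which is what pins the constant at $3/2$; an arbitrary colour class could be much larger and would only give a weaker bound.

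For the second step, I will use that $G\in\mathcal{G}'$ is connected and not complete: this gives $\omega(G)<\Delta(G)+1$, and since $G$ then has an edge, $\omega(G)\ge 2$. (The bound just proved also forces $k\ge 2$ and hence $\Delta(G)+1\ge\tfrac{3k-1}{2}>k$, so everything stays in a sensible range.) Consequently $k=\eta\,\omega(G)+(1-\eta)(\Delta(G)+1)$ can be solved for $\eta=\frac{\Delta(G)+1-k}{\Delta(G)+1-\omega(G)}$, and since the denominator is positive, $\eta>1/3$ is equivalent to $2(\Delta(G)+1)>3k-\omega(G)$. This in turn follows from the chain $2(\Delta(G)+1)\ge 3k-1>3k-2\ge 3k-\omega(G)$. (If no non-unique frozen $k$-colouring of $G$ with all colour classes of size at least two exists, then $\eta^{(k)}(G)=+\infty>1/3$ vacuously, so all cases are covered.)

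I do not expect a genuine obstacle here: the argument is a short computation once one commits to feeding a minimum colour class into \cref{lem:avgdeg_frozen}. The only places that need a little care are checking that $n-(k-1)>0$ — so that the bound is monotone in $t$ and substituting $t=n/k$ is legitimate — and verifying the elementary facts $\omega(G)\ge 2$ and $\Delta(G)+1>k$, which place $\eta$ in the range where the final chain of inequalities is strict.
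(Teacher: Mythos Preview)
Your proof is correct and follows essentially the same approach as the paper: apply \cref{lem:avgdeg_frozen} to a smallest colour class, use $n\ge 2k$ to get $\Delta(G)\ge\tfrac32(k-1)$, then solve for $\eta$ and invoke $\omega(G)\ge 2$ for strictness. The paper is terser---it writes the degree bound as a single inequality chain and finishes with $\eta^{(k)}(G)=\frac{\Delta+1-k}{\Delta+1-\omega}>\frac{\Delta+1-k}{\Delta}\ge\frac13$---but the content is the same as your monotonicity-in-$t$ argument and your chain $2(\Delta+1)\ge 3k-1>3k-2\ge 3k-\omega$.
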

\begin{poc}
    Let $U$ be a colour class of minimal size. We have $n \ge 2k$ and by \cref{lem:avgdeg_frozen},
    \begin{align*}
    \Delta(G) \ge \deg(U)/|U| &\ge \frac{n-|U|+(k-1)(|U|-1)}{|U|}\\ 
    &\ge \frac32(k-1).
    \end{align*}

    So $k \le 2 \Delta(G)/3 +1$. We have $k =\eta^{(k)}(G)\omega (G)+ (1-\eta^{(k)}(G))(\Delta (G) + 1)$ and thus
    $$ \eta^{(k)}(G)= \frac{\Delta (G) +1-k}{\Delta (G) +1- \omega(G)} > \frac{\Delta (G) +1-k}{\Delta (G)}\ge \frac13.$$
    \end{poc}

This proves that $\eta(G)$ and $\eta^\star$ are at least $1/3$, and hence $\eta^\star=1/3$.
Moreover, note that $\eta^\star$ is unattained in $\mathcal{G}$, otherwise some graph $G \in \mathcal{G}$ would verify $\eta^{(k)}(G) = 1/3$.
As all frozen non-unique $k$-colourings of any graph $G$ must verify $k > \eta^\star\omega(G) + (1-\eta^\star)(\Delta +1)$, this concludes the proof of \cref{thm:no_frozen}.
\end{proof}

\section{Frozen colourings of triangle-free graphs}\label{sec:triangle_free}
The random construction of Bonamy, Heinrich Legrand-Duchesne and Narboni has
clique number $\Theta(\log k)$. We show here that $\eta^\star$ changes when
restricting to graphs of bounded clique number.
\cref{thm:triangle_free}, that we recall below, shows that
$\eta^\star(\{\text{triangle-free graphs}\}) = 4/9$.

\trianglefree*

We prove the two parts of \cref{thm:triangle_free} separately.

\subsection*{Existence of frozen colourings of triangle-free graphs with $\eta > 4/9$}
To prove \ref{thm:triangle_free_i} and show that $\eta^\star(\{\text{triangle-free graphs}\}) \le  4/9$, we construct triangle-free graphs with a frozen non-unique $k$-colouring and small maximum degree:

\begin{lemma}
    \label{lem:frozen_triangle_free}
    For any~$k$ there exists a triangle-free
    graph $G_k$ with maximum degree at most $(9k-1)/5$, that admits a frozen $k$-colouring
    and is not $k$-recolourable.
\end{lemma}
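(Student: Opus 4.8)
The goal is to build, for each $k$, a triangle-free graph $G_k$ with a frozen non-unique $k$-colouring and $\Delta(G_k) \le (9k-1)/5$. The natural starting point is \cref{lem:avgdeg_frozen}, which forces the maximum degree of any frozen $k$-colouring to be large unless the colour classes are large; indeed, optimising that inequality with $n = |V(G_k)|$ suggests that every colour class should have size roughly $n/k$, and working backwards from the target $\Delta \le (9k-1)/5$ we should aim for $n$ of order $9k/5$ times the common class size. I would look for a highly symmetric construction: fix a small "pattern" graph $H$ that is triangle-free and has a frozen colouring witnessing $\eta > 4/9$ for a constant number of colours, then "blow up" or take a suitable product/lexicographic-type construction with independent sets so that the number of colours scales with $k$ while the ratio $\Delta/(k-1)$ stays pinned at $9/5$. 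The design constraint is twofold: triangle-freeness (so the pattern must have girth at least $4$, and blow-ups must only replace vertices by independent sets along non-adjacent fibres), and non-uniqueness of the frozen colouring (so $G_k$ must admit a second, genuinely different proper $k$-colouring — typically a colouring using far fewer colours, as in the $\beta$ colouring of \cref{fig:product}).

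The key steps, in order, would be: (1) define $G_k$ explicitly — I expect it to be something like a balanced "tripartite-ish" construction on vertex set indexed by pairs, analogous to the $K_3 \times K_k$ construction of \ref{thm:main_ii} but with the $K_3$ replaced by a triangle-free gadget (e.g.\ $C_5$ or a larger bipartite-plus-structure graph) whose $9/5$ edge density is what produces the $4/9$; the "$2$" appearing in $k = \lceil 2\eta + (1-\eta)(\Delta+1)\rceil$ and the clique number $2$ of triangle-free graphs must match up, so $\omega(G_k) = 2$ and the gadget must itself be triangle-free); (2) verify $G_k$ is triangle-free by checking the adjacency rule never produces three mutually adjacent vertices; (3) exhibit the frozen $k$-colouring $\alpha$ — assign colours so that each colour class is an independent set and every pair of colour classes induces a connected (indeed, spanning-connected) bipartite subgraph, which makes the colour partition Kempe-invariant; (4) compute $\Delta(G_k)$ from the adjacency rule and check it is at most $(9k-1)/5$ (this is where the $5$ and $9$ must fall out of the arithmetic, so the gadget sizes have to be chosen precisely, possibly with a case split on $k \bmod 5$ to handle the ceiling/floor); (5) exhibit a second proper $k$-colouring $\beta$ not obtainable from $\alpha$ by permuting colours — most cleanly, a colouring that uses strictly fewer than $k$ colours (so it cannot be a permutation of the frozen $\alpha$, all of whose classes are nonempty), which together with frozenness of $\alpha$ shows $G_k$ is not $k$-recolourable.

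I expect the main obstacle to be step (1)/(4): finding the exact triangle-free gadget and blow-up sizes so that the maximum degree hits the sharp bound $(9k-1)/5$ rather than something merely $O(k)$. The value $9/5$ is delicate — it is the optimum of the ratio coming from \cref{lem:avgdeg_frozen} under the additional constraint $\omega = 2$ and the girth/independence requirements, so the construction has essentially no slack, and getting the off-by-one in the ceiling right ($9k-1$ rather than $9k$, say) will force a careful, possibly residue-dependent, choice of parameters. Triangle-freeness is a genuine constraint on how aggressively one can densify the gadget, so the verification in step (2) and the degree count in step (4) are coupled and must be balanced against each other. Once the construction is pinned down, steps (2), (3) and (5) should be routine structural checks: triangle-freeness by inspection of the adjacency rule, frozenness by checking pairwise connectivity of colour classes (which for a sufficiently symmetric blow-up reduces to connectivity in the pattern graph $H$), and non-recolourability by producing the low-colour witness $\beta$ and invoking the fact that a frozen colouring with all classes nonempty is unique up to permutation among $k$-colourings using all $k$ colours.
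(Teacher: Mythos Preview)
Your plan has the right skeleton and several correct intuitions — colour classes of equal size, the number $5$ playing a role, a $C_5$-flavoured gadget, and a second colouring using fewer colours to witness non-recolourability — but it is missing the one idea that actually makes the degree bound work.

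The paper's construction is essentially what you are groping towards: take $k$ colour classes $V_1,\dots,V_k$ each of size $5$, and between any $V_i$ and $V_j$ connect vertex $x$ in $V_i$ to vertices $x+2$ and $x+3 \pmod 5$ in $V_j$. This is triangle-free by a short mod-$5$ argument, every bichromatic pair induces $C_{10}$, so the colouring is frozen, and a second $3$-colouring of $V_1\cup V_2\cup V_3$ gives non-recolourability. This is \emph{not} a blow-up or lexicographic product (those would destroy bichromatic connectivity), and it is $(2k-2)$-regular — which is \emph{too large} for your target $(9k-1)/5$.

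The step your plan does not anticipate, and which is the real content, is a \emph{deterministic edge-deletion}: from each bichromatic $C_{10}$ delete one carefully chosen edge (depending on $j-i \bmod 5$) so that each pair still induces a connected $P_{10}$, hence the colouring stays frozen, while the deletions are spread evenly enough that every vertex loses roughly a $1/5$ fraction of its incident edges, bringing $\Delta$ down to at most $(9k-1)/5$. Your expectation that a single symmetric gadget-product would directly hit the sharp bound, perhaps with a $k\bmod 5$ case split, does not pan out: any natural symmetric construction with size-$5$ classes and connected bichromatic pairs is already $(2k-2)$-regular, and the sparsification has to be done as a separate, carefully coordinated step.
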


\ref{thm:triangle_free_i} follows directly from \cref{lem:frozen_triangle_free}:

\begin{proof}[Proof of \ref{thm:triangle_free_i} assuming \cref{lem:frozen_triangle_free}]
Let $\eta$ such that the graph $G_k$ in
\cref{lem:frozen_triangle_free} verifies $k = 2\eta + (1-\eta)(\Delta(G_k)+1)$. We have $$\eta = \frac{\Delta(G_k)+1-k}{\Delta(G_k)-1} \leq \frac{4(k+1)}{9k-6}.$$ As
this inequality holds for any $k$, this proves \ref{thm:triangle_free_i} and $\eta^\star(\{\text{triangle-free graphs}\}) \le 4/9$.
\end{proof}

\begin{proof}[Proof of \cref{lem:frozen_triangle_free}]
  Let $k \ge 0$. We first build a $(2k-2)$-regular triangle-free graph $H_k$
  with a frozen non-unique $k$-colouring. Then, we will consider a subgraph $G_k$
  of $H_k$ to reduce its maximum degree while preserving the other properties
  of $H_k$.

  Consider the graph $H_k$ on $5k$ vertices defined as follows (the
  construction is illustrated on \cref{fig:optimal_triangle_free}). Partition
  the vertices into $k$ colour classes $V_1, \dots, V_k$ of size $5$ and number
  the vertices from 1 to 5 within each colour class.
  For any integers $i<j$, for any $x \in [5]$, connect the vertex labelled $x$
  in $V_i$ to the vertices labelled $x + 2 \mod 5$ and $x + 3 \mod 5$ in
  $V_j$. This results in a $(2k-2)$-regular graph $H_k$ with a frozen
  $k$-colouring $\alpha$ whose colour classes are the sets $V_i$.
  \begin{figure}[h!]
\centering
\begin{tikzpicture}[scale = 1.3]
  \pgfmathsetmacro{\k}{6}
  \pgfmathsetmacro{\km}{\k-1}
  \pgfmathsetmacro{\n}{5}

  \foreach \i in {1,...,\k}{
    \foreach \j in {1,...,\n}{
      \coordinate (a\i\j) at (\i*360/\k + \j*360/\k/\n:5cm);
    }
  }

  \foreach \a in {1,...,\km}{
    \pgfmathsetmacro{\i}{int(\a +1)}
    \foreach \b in {\i,...,\k}{
      \foreach \c in {1,...,\n}{
        \pgfmathsetmacro{\x}{int(Mod(\c+1,\n)+1}
        \pgfmathsetmacro{\y}{int(Mod(\c+2,\n)+1}
        \draw (a\b\x) -- (a\a\c) -- (a\b\y);          
      }
    }
  }

  \foreach \j in {1,...,\n}{
    \node[draw=black,fill=\colora,circle,inner sep = 1pt, minimum size = 2*\ray] at (a1\j) {};
    \node[draw=black,fill=\colorb,circle,inner sep = 1pt, minimum size = 2*\ray] at (a2\j) {};
    \node[draw=black,fill=\colorc,circle,inner sep = 1pt, minimum size = 2*\ray] at (a3\j) {};
    \node[draw=black,fill=\colord,circle,inner sep = 1pt, minimum size = 2*\ray] at (a4\j) {};
    \node[draw=black,fill=\colore,circle,inner sep = 1pt, minimum size = 2*\ray] at (a5\j) {};
    \node[draw=black,fill=\colorf,circle,inner sep = 1pt, minimum size = 2*\ray] at (a6\j) {};
  }  

 \begin{scope}[rotate= 360/\k + 360/\k/\n]
    \braceme{5.2cm}{-5}{360/\k - 360/\k/\n+5}{br1}{}
 \end{scope}     
 \begin{scope}[rotate= 2*360/\k + 360/\k/\n]
    \braceme{5.2cm}{-5}{360/\k - 360/\k/\n+5}{br2}{}
 \end{scope}     
 \begin{scope}[rotate= 360/\k/\n]
    \braceme{5.2cm}{-5}{360/\k - 360/\k/\n+5}{br3}{}
 \end{scope}     
 \node at (360/\k + 3*360/\k/\n:5.5cm) {$V_1$};
 \node at (2*360/\k + 3*360/\k/\n:5.5cm) {$V_2$};
 \node at (3*360/\k/\n:5.5cm) {$V_k$};

\end{tikzpicture}
\caption{The graph $H_6$. Within each set $V_i$ the vertices are labelled in anti-clockwise order.}\label{fig:optimal_triangle_free} 
\end{figure}
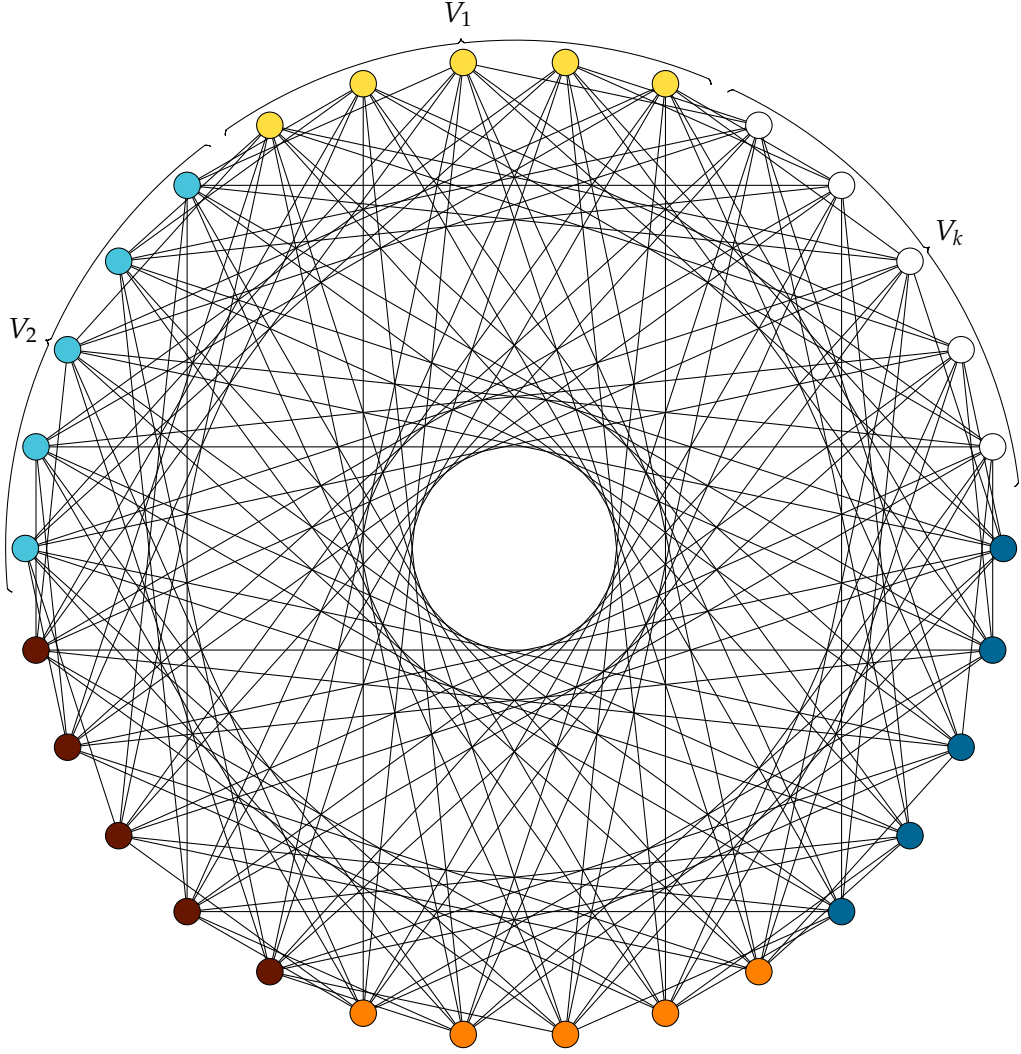  
 
  We first argue that $H_k$ is triangle-free. Indeed, any triangle would have to
  use vertices in three different colour classes, say $V_i$, $V_j$ and $V_\ell$,
  with $i <j <\ell$. Denote $x$ the label of the vertex of $V_i$ in this
  triangle. By construction, this would imply the following relation
  $x + a_{ij} + a_{j\ell} = x + a_{i\ell} \mod 5$, where
  $a_{ij}, a_{j\ell}, a_{i\ell} \in \{2,3\}$. In other words,
  $0 \in \{1, \dots, 4\} \mod 5$, which is a contradiction.

  Let~$G_k$ be the graph obtained from~$H_k$ as follows. For each $i<j$, we delete the edge between the vertex labelled~$x=j-i \mod 5$ in~$V_i$ and the vertex labelled~$y=x+2 \mod 5$ in~$V_j$. Note that~$\alpha$ remains a frozen $k$-colouring in~$G_k$ as any pair of colours induces a path on ten vertices. Moreover, the degree of any vertex in~$V_i$ is at most \[ 2(k-1) - \left\lfloor \frac{i-1}{5} \right\rfloor - \left\lfloor \frac{k-i}{5} \right\rfloor \leq 2(k-1) - \frac{i-5}{5} - \frac{k-i-4}{5} = \frac{9k-1}{5}~.\]

  It remains only to prove that $G_k$ admits a different $k$-colouring, which
  follows from the fact that $H_k[V_1 \cup V_2 \cup V_3]$ admits another
  3-colouring (see \cref{fig:other_triangle_free}).

 \begin{figure}[h!]
\centering
\begin{tikzpicture}[scale = 1]
  \pgfmathsetmacro{\k}{3}
  \pgfmathsetmacro{\km}{\k-1}
  \pgfmathsetmacro{\n}{5}

  \foreach \i in {1,...,\k}{
    \foreach \j in {1,...,\n}{
      \coordinate (a\i\j) at (\i*360/\k + \j*360/\k/\n:4cm);
    }
  }

  \foreach \a in {1,...,\km}{
    \pgfmathsetmacro{\i}{int(\a +1)}
    \foreach \b in {\i,...,\k}{
      \foreach \c in {1,...,\n}{
        \pgfmathsetmacro{\x}{int(Mod(\c+1,\n)+1}
        \pgfmathsetmacro{\y}{int(Mod(\c+2,\n)+1}
        \draw (a\b\x) -- (a\a\c) -- (a\b\y);          
      }
    }
  }

  \foreach \a in {13,23,33,12,22,32}{
    \node[draw=black,fill=\colora,circle,inner sep = 1pt, minimum size = 2*\ray] at (a\a) {};
  }  
  \foreach \a in {11,21,31}{
    \node[draw=black,fill=\colorb,circle,inner sep = 1pt, minimum size = 2*\ray] at (a\a) {};
  }  
  \foreach \a in {14,24,34,15,25,35}{
    \node[draw=black,fill=\colorc,circle,inner sep = 1pt, minimum size = 2*\ray] at (a\a) {};
  }  
 \begin{scope}[rotate= 360/\k + 360/\k/\n]
    \braceme{4.5cm}{-5}{360/\k - 360/\k/\n+5}{br1}{}
 \end{scope}     
 \begin{scope}[rotate= 2*360/\k + 360/\k/\n]
    \braceme{4.5cm}{-5}{360/\k - 360/\k/\n+5}{br2}{}
 \end{scope}     
 \begin{scope}[rotate= 360/\k/\n]
    \braceme{4.5cm}{-5}{360/\k - 360/\k/\n+5}{br3}{}
 \end{scope}     
 \node at (360/\k + 3*360/\k/\n:4.8cm) {$V_1$};
 \node at (2*360/\k + 3*360/\k/\n:4.8cm) {$V_2$};
 \node at (3*360/\k/\n:4.8cm) {$V_3$};

\end{tikzpicture}
\caption{Another 3-colouring of $H_k[V_1 \cup V_2 \cup V_3]$. Within each set $V_i$ the vertices are labelled in anti-clockwise order.}\label{fig:other_triangle_free} 
\end{figure}
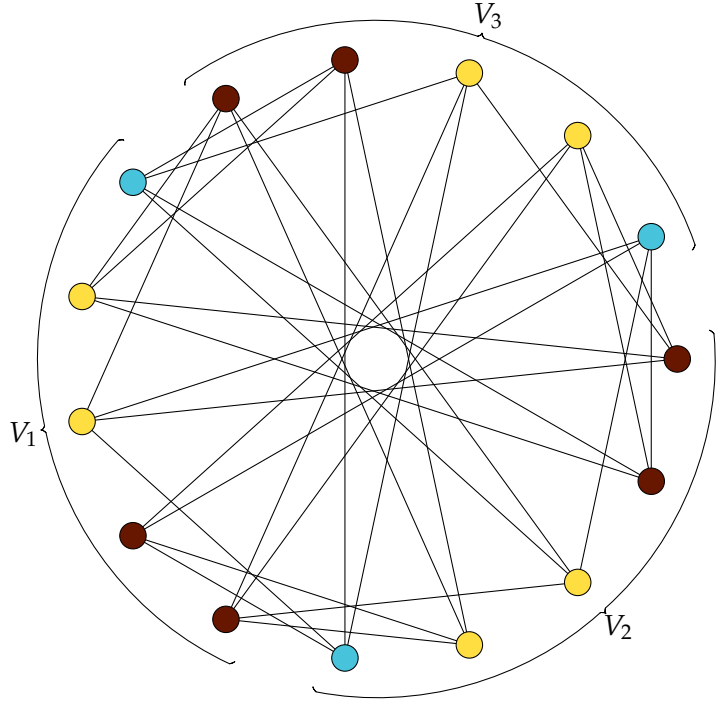
\end{proof}    

Note that this construction is similar to the construction of Bonamy, Heinrich, Legrand-Duchesne and Narboni: they also build a $(2k-2)$-regular graph $H_k$ with a frozen $k$-colouring such that all its colour classes have size two (instead of five in our case) and any two pair of colours span a cycle. Then, they remove independently at random an edge from each of those cycles to decrease the maximum degree. In our case, this random edge-removal procedure would also concentrate the degrees around $9(k-1)/5$ using Chernoff bound. This yields a graph $G_k$ with maximum degree at most $\delta k$ for any $\delta > 9/5$ and large enough $k$, thereby proving a slightly weaker version of \cref{lem:frozen_triangle_free} that still implies \ref{thm:triangle_free_ii}. The deterministic edge-removal procedure we described instead is less flexible but is constructive and has maximum degree at most $(9k-1)/5$. 

\subsection*{Non-existence of frozen colourings of triangle-free graphs with $\eta \le 4/9$}
\begin{proof}[Proof of \ref{thm:triangle_free_ii}]
  Let $G$ be a triangle-free graph with a frozen $k$-colouring $\alpha$ that is
  non-unique (hence $G$ is not a clique), with
  $k =\lceil2\eta + (1-\eta)(\Delta +1)\rceil$ for some $\eta \le
  4/9$. Here again, one can assume without loss of generality that
  $k \ge 2\eta + (1-\eta)(\Delta +1)$. Therefore, $\Delta(G) \le (k - 1-\eta)/(1-\eta) < 9(k-1)/5$. Furthermore, since $G$ is not a clique and $\alpha$ is frozen, we have $\Delta(G)+1 \ge 3$ and since $\eta \le 4/9$, note that $k \ge 3$.
  
  We first bound the number of colour classes of size $m$ in $\alpha$, for all
  $m \le 4$.
  \begin{claim}\label{cl:number_colour_classes}
    The colouring $\alpha$ has no colour class of size one or two, at most two colour classes of size three and at most three colour classes of size four.  
  \end{claim}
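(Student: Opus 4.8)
The plan is to bound the number of vertices $n$ from above in terms of $k$ whenever $\alpha$ has a small colour class, via \cref{lem:avgdeg_frozen} together with the hypothesis $\Delta(G)<9(k-1)/5$, and then to contradict this. Two elementary facts about a frozen colouring of a non-clique $G$ will be used throughout. First, every colour class is non-empty, and since each $G[V_c\cup V_{c'}]$ is connected (on at least two vertices) it has no isolated vertex, so every vertex has a neighbour in every colour class other than its own; in particular $\delta(G)\ge k-1$. Second, if a vertex $v$ is adjacent to \emph{all} of a class $V_c$, then $N(v)$ is independent and contains $V_c$, so any neighbour of $v$ outside $V_c$ would have no neighbour in $V_c$, contradicting the first fact; hence $N(v)\subseteq V_c$ and $k-1\le\deg(v)\le|V_c|$, so no vertex can dominate a class of size at most $k-2$. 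A class $\{u\}$ of size one would dominate all of $G$, making $V(G)\setminus\{u\}$ independent by triangle-freeness, so $G$ would be a star; but then two colour classes consist only of leaves and span no edge, contradicting frozenness whenever $k\ge3$. Hence $a_1=0$.

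For a class $U=\{u_1,u_2\}$ of size two and any other class $V_c$, connectivity of the bipartite graph $G[U\cup V_c]$ forces a $u_1$--$u_2$ path; it has even length, hence (as $|U|=2$) length two, so its middle vertex lies in $V_c$ and dominates $U$. By the second fact above this is impossible as soon as $k\ge4$. For $k=3$ the same common-neighbour structure gives $n+k-3\le\deg(u_1)+\deg(u_2)\le 2\Delta(G)\le 6$, hence $n\le6$; since $a_1=0$ gives $n\ge6$, all three classes have size two, \cref{lem:avgdeg_frozen} forces $G$ to be $3$-regular, so $G=K_{3,3}$, which cannot be $3$-coloured with three classes of size two. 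Thus $a_2=0$, and from now on every colour class has size at least three.

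For sizes three and four I would use a uniform scheme: assume the claimed bound fails, deduce that there must then be many classes of that size, and derive a contradiction from triangle-freeness. If $t$ classes have size three, the remaining $k-t$ have size at least four, so $n\ge4k-t$; meanwhile a size-three class $U$ has $\deg(U)<27(k-1)/5$ whereas \cref{lem:avgdeg_frozen} gives $\deg(U)\ge n-3+2(k-1)$, so $n<(17k-2)/5$ and therefore $t>(3k+2)/5$. The union of these $t$ classes induces a triangle-free graph on $3t$ vertices in which every pair of classes spans a connected subgraph, hence at least five edges, so Mantel's theorem forces $5\binom{t}{2}\le 9t^2/4$, i.e.\ $t\le10$, which contradicts $t>(3k+2)/5$ except for small $k$; those remaining cases are finished by a rigidity argument, since \cref{lem:avgdeg_frozen} then pushes the degrees inside the size-three classes up to the Mantel extremum and forces their union to be essentially complete bipartite, which cannot be partitioned into pairwise-connected independent triples. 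For $a_4\le3$ the counting half is the same (a size-four class gives $n<(21k-1)/5$, so $a_4\ge4$ together with $a_3\le2$ forces $a_4>(4k-19)/5$), but here the plain Mantel inequality no longer bounds the number of size-four classes; one must additionally use that for $k\ge6$ no vertex dominates a size-four class, so that every bipartite graph between two size-four classes has maximum degree at most three, and then analyse the possible structures to reach the contradiction.

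The step I expect to be the main obstacle is this last one: ruling out a triangle-free graph that contains $\Theta(k)$ pairwise-connected independent $4$-sets while respecting $\Delta(G)<9(k-1)/5$. For the smaller classes, either the observation that no vertex dominates a small class, or Mantel's theorem, settles matters almost at once; for size four the extremal configurations must be understood in detail, and in particular some care is needed to bring the bound down to exactly $3$ rather than merely an absolute constant.
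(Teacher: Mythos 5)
Your handling of colour classes of size one and two is correct (your ``no vertex dominates a class of size at most $k-2$'' fact, plus the separate $k=3$ analysis ending in $K_{3,3}$, does the job, if more laboriously than necessary), but the two assertions that carry the real content of the claim are not actually proved. For size three, your counting via \cref{lem:avgdeg_frozen} and $\Delta(G)<9(k-1)/5$ gives $t>(3k+2)/5$ whenever $t\ge 1$, and Mantel gives $t\le 10$; these only clash when $k\ge 16$. For all smaller $k$ (where $t$ could a priori be anywhere between $3$ and $10$) you appeal to an unexecuted ``rigidity argument'' that the union of the size-three classes is ``essentially complete bipartite''; that is exactly the step that would need to be written out, and it is not obvious it goes through uniformly in $k$, let alone down to the bound $t\le 2$ that the claim asserts. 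For size four you concede the point yourself: Mantel gives nothing, and the structural analysis of $\Theta(k)$ pairwise-connected independent $4$-sets under $\Delta(G)<9(k-1)/5$ is flagged as ``the main obstacle'' rather than carried out. So as it stands the proposal establishes only the size-one and size-two parts of the claim.

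For comparison, the paper avoids the global counting entirely. It first shows that \emph{no} vertex dominates \emph{any} colour class (if $u$ dominated $U$, a neighbour $v\notin U$ of $u$ and a neighbour $w\in U$ of $v$ would form a triangle; this needs only $k\ge 3$), which kills sizes one and two at once. Then three size-three classes are excluded by a short direct argument valid for every $k$: each pair of such classes must span a path on six vertices, and comparing which vertices can have two neighbours in the third class forces two adjacent vertices with a common neighbour, i.e.\ a triangle. Finally, four size-four classes are excluded by enumerating the five possible connected, triangle-free, non-dominating adjacency patterns between two size-four classes and checking (by computer) that they cannot be combined into a frozen $4$-colouring. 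In other words, the bounds ``at most two'' and ``at most three'' come from local structural case analysis, not from density counting; if you want to salvage your route you would have to supply precisely the finite structural analyses you skipped, which is where all the difficulty lives.
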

  \begin{poc}
    We first prove that no colour class of $G$ is dominated by a vertex. Assume
    otherwise and let $u$ be a vertex dominating a colour class $U$. Let $v$ be a neighbour
    of $u$ not in $U$ (such a vertex exists because $k \ge 3$) and $w$ be a
    neighbour of $v$ in $U$. The vertices $u$, $v$ and $w$ form a triangle.
    In particular, $\alpha$ has no colour class of size one or two as it would be dominated by some vertex in any other colour class.

    Suppose that $\alpha$ has three colour classes $V_1$, $V_2$ and $V_3$ of size three. Since no vertex dominates a colour class, each pair of colour classes spans a path on 6 vertices, say $u_1, w_2, v_1, v_2, w_1, u_2$ with $u_i$, $v_i$ and $w_i$ in $V_i$ for $i \in \{1,2\}$. For each $i \le 2$, $X_i = \{u_i,w_i\}$ dominates $V_{3-i}$, so $u_i$ and $w_i$ cannot have a common neighbour $x$ in $V_3$, otherwise any neighbour of $x$ in $V_{3-i}$ forms a triangle with $x$ and one of the vertices in $X_i$. Thus, for each $i\le 2$, $u_i$ or $w_i$ has degree one in $V_3$. In particular $\deg(v_i,V_3) = 2$ for each $i \le 2$, so $v_1$ and $v_2$ have a common neighbour in $V_3$, but as $v_1$ and $v_2$ are adjacent, this creates a triangle.
    
    Suppose now that $\alpha$ has four colour classes of size four. One can assume without loss of generality that each pair of them spans a tree. As no vertex dominates another colour class, the only possible adjacencies between any two of these colour classes are those depicted on \cref{fig:adjacencies}. To avoid a tedious case analysis, we computer checked that it is not possible to build a frozen four-colouring using those\footnote{See \href{https://github.com/johnlepoulpe/Listing-frozen-4-colourings/blob/main/listing_frozen_triangle_free.ipynb}{here} for the code that was used.}.
    
    \begin{figure}[h!]
  \centering
    \begin{tikzpicture}
        \foreach \x in {0,...,3}{
            \node[circle,draw=black,inner sep =1pt, minimum size=  \ray,fill=\colora]  (a\x) at (0,\x) {};
            \node[circle,draw=black,inner sep =1pt, minimum size=  \ray,fill=\colorb]  (b\x) at (1.5,\x) {};
        }
        \draw (a0) -- (b0) -- (a1) -- (b1) -- (a2) -- (b2) -- (a3) -- (b3);
    \end{tikzpicture}
    \hfill
       \begin{tikzpicture}
        \foreach \x in {0,...,3}{
            \node[circle,draw=black,inner sep =1pt, minimum size=  \ray,fill=\colora]  (a\x) at (0,\x) {};
            \node[circle,draw=black,inner sep =1pt, minimum size=  \ray,fill=\colorb]  (b\x) at (1.5,\x) {};
        }
        \draw (b0) -- (a0) -- (b1) -- (a1) (a0) -- (b2) (b2) -- (a2) -- (b3) -- (a3);
    \end{tikzpicture}
    \hfill
    \begin{tikzpicture}
        \foreach \x in {0,...,3}{
            \node[circle,draw=black,inner sep =1pt, minimum size=  \ray,fill=\colora]  (a\x) at (0,\x) {};
            \node[circle,draw=black,inner sep =1pt, minimum size=  \ray,fill=\colorb]  (b\x) at (1.5,\x) {};
        }
        \draw (a1) -- (b0) -- (a0) -- (b1) -- (a2) (a0) -- (b2) -- (a3) -- (b3);
    \end{tikzpicture}
    \hfill
    \begin{tikzpicture}
        \foreach \x in {0,...,3}{
            \node[circle,draw=black,inner sep =1pt, minimum size=  \ray,fill=\colora]  (a\x) at (0,\x) {};
            \node[circle,draw=black,inner sep =1pt, minimum size=  \ray,fill=\colorb]  (b\x) at (1.5,\x) {};
        }
        \draw (a1) -- (b0) -- (a0) -- (b1)  (a0) -- (b2) (a3) -- (b3) -- (a2) -- (b0) ;
    \end{tikzpicture}
    \hfill
    \begin{tikzpicture}
        \foreach \x in {0,...,3}{
            \node[circle,draw=black,inner sep =1pt, minimum size=  \ray,fill=\colora]  (a\x) at (0,\x) {};
            \node[circle,draw=black,inner sep =1pt, minimum size=  \ray,fill=\colorb]  (b\x) at (1.5,\x) {};
        }
        \draw (a1) -- (b0) -- (a0) -- (b1)  (a0) -- (b2) -- (a3)  (b3) -- (a2) -- (b0)  ;
    \end{tikzpicture}
     
  \caption{All the possible adjacencies between two colour classes of size four in $\alpha$, up to isomorphism.}
  \label{fig:adjacencies}
\end{figure}
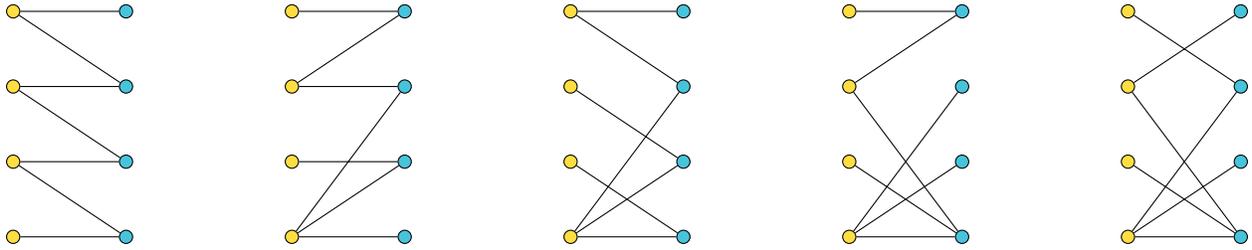
  \end{poc}

  We can now improve on \cref{cl:number_colour_classes} and prove that in fact
  all colour classes have size at least five.
  \begin{claim}\label{cl:size_colour_classes}
    The colouring $\alpha$ has no colour class of size less than five.
  \end{claim}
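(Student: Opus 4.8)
The plan is to assume, for contradiction, that some colour class $U$ has $|U|\le 4$ and to extract from it a vertex of too-large degree. By \cref{cl:number_colour_classes} we already know that $|U|\in\{3,4\}$, that $G$ has at most two classes of size $3$ and at most three of size $4$, and that no class has size $1$ or $2$. Writing $a\le 2$ and $b\le 3$ for the numbers of colour classes of sizes $3$ and $4$, we get $n\ge 3a+4b+5(k-a-b)=5k-2a-b\ge 5k-7$; for the two smallest values of $k$ the extra constraint $a+b\le k$ sharpens this to $n\ge 10$ when $k=3$ and $n\ge 14$ when $k=4$. The other ingredient is a sharp maximum-degree bound: as already noted, $\Delta(G)\le\frac{k-1-\eta}{1-\eta}$, and since this equals $1+\frac{k-2}{1-\eta}$ with $k\ge 3$, it is non-decreasing in $\eta$, so from $\eta\le 4/9$ we obtain $\Delta(G)\le\frac{k-1-4/9}{5/9}=\frac{9k-13}{5}$.

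Now I would split on $|U|$. If $|U|=3$, then \cref{lem:avgdeg_frozen} together with $n\ge 5k-7$ gives
\[
  \Delta(G)\ \ge\ \frac{n-3+2(k-1)}{3}\ =\ \frac{n+2k-5}{3}\ \ge\ \frac{7k-12}{3},
\]
and combining with $\Delta(G)\le\frac{9k-13}{5}$ forces $5(7k-12)\le 3(9k-13)$, i.e. $8k\le 21$, contradicting $k\ge 3$. If $|U|=4$, then \cref{lem:avgdeg_frozen} gives $\Delta(G)\ge\frac{n-4+3(k-1)}{4}=\frac{n+3k-7}{4}$. For $k\ge 5$, using $n\ge 5k-7$ this is at least $\frac{8k-14}{4}=\frac{4k-7}{2}$, and $\frac{4k-7}{2}\le\frac{9k-13}{5}$ would force $2k\le 9$, impossible. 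For the remaining cases $k\in\{3,4\}$ I use the sharper bounds $n\ge 10$ and $n\ge 14$: these yield $\Delta(G)\ge\frac{12}{4}=3>\frac{14}{5}=\frac{9\cdot 3-13}{5}$ and $\Delta(G)\ge\frac{19}{4}>\frac{23}{5}=\frac{9\cdot 4-13}{5}$ respectively, again contradicting the degree bound. Hence no colour class has size at most $4$, which together with \cref{cl:number_colour_classes} proves the claim.

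I expect the only real (and mild) obstacle to be keeping the degree bound sharp enough: the weaker estimate $\Delta(G)<9(k-1)/5$ recorded earlier does not close the argument for small $k$, so one must genuinely exploit $\eta\le 4/9$ and carry the exact value $\frac{9k-13}{5}$, and then treat $k\in\{3,4\}$ in the $|U|=4$ case separately with the refined lower bound on $n$. Everything else is routine bookkeeping with \cref{lem:avgdeg_frozen} and \cref{cl:number_colour_classes}.
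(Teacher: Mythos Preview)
Your argument is correct and follows the same strategy as the paper: bound $n$ from below via \cref{cl:number_colour_classes}, feed this into \cref{lem:avgdeg_frozen} applied to a small class $U$, and contradict the upper bound on $\Delta(G)$.

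The paper's execution is a little cleaner, and in particular your expectation that the weaker bound $\Delta(G)<9(k-1)/5$ ``does not close the argument for small $k$'' turns out to be unfounded. Two small tweaks make the sharper bound and the case split on $k$ unnecessary. First, take $U$ to be a \emph{smallest} colour class: then $|U|=4$ forces $a=0$, giving $n\ge 12+5(k-3)=5k-3$ (and $|U|=3$ gives $n\ge 6+4(k-2)=4k-2$). Second, use integrality: $\Delta(G)\ge\lceil \deg(U)/|U|\rceil$. With these choices one gets $\deg(U)/|U|\ge (6k-7)/3$ when $|U|=3$ and $\deg(U)/|U|\ge (8k-10)/4$ when $|U|=4$, and in both cases the ceiling is $2k-2$. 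Since $2(k-1)>\tfrac{9}{5}(k-1)$ for every $k\ge 2$, this already contradicts $\Delta(G)<9(k-1)/5$ uniformly, with no separate treatment of $k\in\{3,4\}$ and no need for the refined bound $(9k-13)/5$.
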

    \begin{poc}
      Let $U$ be one of the smallest colour classes.
      By ~\cref{cl:number_colour_classes}, we have $|U| \ge 3$. We have $\Delta(G) \ge \lceil\deg(U)/|U|\rceil$
      and by \cref{lem:avgdeg_frozen}
      $$\frac{\deg(U)}{|U|}\ge \frac{n-|U| +
        (k-1)(|U|-1)}{|U|} = \frac{n + k(|U|-1) +1 -2|U|}{|U|}$$

      We will first determine a lower bound on $\deg(U)/|U|$, before raising a contradiction.
      If $|U| = 3$, then by \cref{cl:number_colour_classes} we have $n \ge
      6+4(k-2)$ and thus $\deg(U)/|U| \ge (6k-7)/3$.
      If $|U| = 4$, then by \cref{cl:number_colour_classes} we have $n \ge
      12+5(k-3)$ and thus $\deg(U)/|U| \ge (8k-10)/4$.

      In any of theses cases, $\Delta(G) \ge \lceil\deg(U)/|U|\rceil \ge 2k
      -2$. This contradicts $\Delta(G) < 9(k-1)/5$, as $k \ge 3$.
    \end{poc}

    Let $U$ be one of the smallest colour classes. So $n \ge k|U|$ and by
    \cref{cl:size_colour_classes} $|U| \ge 5$. By
    \cref{lem:avgdeg_frozen},
    \begin{align*}
      \frac{\deg(U)}{|U|} &\ge \frac{n + k(|U|-1) +1 -2|U|}{|U|}\\
                  &\ge 2(k-1) - \frac{k-1}{|U|}   \\
                  &\ge \frac95(k-1).
    \end{align*}
    Hence $\Delta(G) \ge \frac95(k-1)$ which is a contradiction.
\end{proof}

\section*{Acknowledgments}
The authors would like to thank Tom\'a\v{s} Kaiser and Marthe Bonamy for the fruitful discussions in the early stages of this project. This research was conducted during the workshop Early Career Researchers in Combinatorics (ECRiC24), held at ICMS in Edinburgh.\\
The first author was supported by ANR project GrR (ANR-18-CE40-0032). \\
The second author was partially supported by the Polish National Science Centre under grant no. 2019/34/E/ST6/00443.\\
The third author was supported by the Warwick Mathematics Institute Centre for Doctoral Training.\\
The fourth author was supported by DFG, project number 546892829.

\bibliographystyle{style_edited}
\bibliography{references}
\end{document}